\documentclass[11pt,a4paper,twoside,enabledeprecatedfontcommands,final]{scrartcl}
\usepackage{a4wide}
\usepackage{amsfonts}
\usepackage{amsmath}
\usepackage{amssymb}
\usepackage{amsthm}
\usepackage[numbers]{natbib}
\usepackage{booktabs}
\usepackage{algorithm,algpseudocode}
\usepackage{cancel}
\usepackage{MnSymbol} 
\usepackage{hyperref}

\newcommand{\N}{\ensuremath{\mathbb{N}}}
\newcommand{\T}{\ensuremath{\mathbb{T}}}
\newcommand{\Z}{\ensuremath{\mathbb{Z}}}
\newcommand{\R}{\ensuremath{\mathbb{R}}}
\newcommand{\C}{\ensuremath{\mathbb{C}}}
\newcommand{\ii}{\textnormal{i}}
\newcommand{\e}{\textnormal{e}}
\newcommand{\ceil}[1]{\left\lceil#1\right\rceil}
\newcommand{\floor}[1]{\left\lfloor#1\right\rfloor}

\newcommand{\boldx}{{\ensuremath{\boldsymbol{x}}}}
\newcommand{\boldy}{{\ensuremath{\boldsymbol{y}}}}
\newcommand{\boldt}{{\ensuremath{\boldsymbol{t}}}}
\newcommand{\boldp}{{\ensuremath{\boldsymbol{p}}}}
\newcommand{\boldk}{{\ensuremath{\boldsymbol{k}}}}
\newcommand{\boldh}{{\ensuremath{\boldsymbol{h}}}}
\newcommand{\boldz}{{\ensuremath{\boldsymbol{z}}}}
\newcommand{\boldzero}{{\ensuremath{\boldsymbol{0}}}}
\newcommand{\boldone}{{\ensuremath{\boldsymbol{1}}}}
\newcommand{\boldgamma}{{\ensuremath{\boldsymbol{\gamma}}}}

\newtheorem{theorem}{Theorem}[section]
\newtheorem{lemma}[theorem]{Lemma}
\newtheorem{remark}[theorem]{Remark}
\newtheorem{definition}[theorem]{Definition}
\newtheorem{corollary}[theorem]{Corollary}

\makeatletter
\def\imod#1{\allowbreak\mkern10mu({\operator@font mod}\,\,#1)}
\makeatother

\algnewcommand\algorithmicforeach{\textbf{for each}}
\algdef{S}[FOR]{ForEach}[1]{\algorithmicforeach\ #1\ \algorithmicdo}

\numberwithin{equation}{section}

\newcommand{\OO}[1]{\mathcal{O}\left(#1\right)}

\title{Multiple Lattice Rules for Multivariate $L_\infty$ Approximation in the Worst-Case Setting}
\subtitle{With applications in tractability and sampling numbers}
\date{03. September 2019}
\author{Lutz K\"ammerer\footnotemark[1]
        }
        
        \hypersetup{
          pdfauthor=Lutz K\"{a}mmerer
          pdftitle={Multiple Lattice Rules for Multivariate $L_\infty$ Approximation in the Worst-Case Setting},
          pdfsubject={65T40, 42B05, 68Q17, 68Q25, 42B35, 65T50, 65Y20, 65D30, 65D32},
          pdfkeywords={approximation of multivariate periodic functions, trigonometric polynomials, lattice rule, multiple rank-1 lattice, fast Fourier transform},
          pdfcreator = {pdflatex},
          plainpages=false,
          pdfstartview=Fit,       
          pdfpagemode=UseOutlines, 
          bookmarksnumbered=true, 
          bookmarksopen=false,     
          bookmarksopenlevel=0,   
          colorlinks=true,       
          linkcolor=black,
          citecolor=black,
          urlcolor=black}

\begin{document}

\maketitle

\begin{abstract}\small
We develop a general framework for estimating the $L_\infty(\T^d)$ error for the approximation of multivariate periodic functions
belonging to specific reproducing kernel Hilbert spaces using approximants that are trigonometric polynomials computed from sampling values.
The used sampling schemes are suitable sets of rank-1 lattices that can be constructed in an extremely efficient way. Furthermore, 
the structure of the sampling schemes allows for fast Fourier transform (FFT) algorithms. We present and discuss
one FFT algorithm and analyze the worst case $L_\infty(\T^d)$ error for this specific approach.

Using this general result we work out very weak requirements on the reproducing kernel Hilbert spaces that allow
for a simple upper bound on the sampling numbers in terms of approximation numbers, where the approximation error
is measured in the $L_\infty(\T^d)$ norm. Tremendous advantages of this estimate are its pre-asymptotic validity as well as
its simplicity and its specification in full detail. It turns out, that approximation numbers and sampling numbers 
differ at most slightly. The occurring multiplicative gap does not depend on the spatial dimension $d$ and depends at most logarithmically on the number of used linear information or sampling values, respectively.
 
Moreover, we illustrate the capability of the new sampling method with the aid of specific highly popular source spaces, which yields that the suggested algorithm is nearly optimal from different points of view. For instance, we improve tractability results for the $L_\infty(\T^d)$ approximation for sampling methods and we achieve almost optimal sampling rates for functions of dominating mixed smoothness.

Great advantages of the suggested sampling method are the
constructive methods for determining sampling sets that guarantee the shown error bounds,
the simplicity of all necessary implementations, i.e., the implementation of the FFT and the construction of the sampling schemes,
and the extreme efficiency of all these algorithms. 

\medskip

\noindent {\it Keywords and phrases} : 
approximation of multivariate periodic functions, trigonometric polynomials, lattice rule, multiple rank-1 lattice, fast Fourier transform, sampling numbers

\medskip

\noindent {\it AMS Mathematics Subject Classification 2010}: 
65T40, %
42B05, %
68Q17, %
68Q25, %
42B35, %
65T50, %
65Y20, %
65D30, %
65D32. %

\end{abstract}

\footnotetext[1]{
  Chemnitz University of Technology, Faculty of Mathematics, 09107 Chemnitz, Germany\\
  lutz.kaemmerer@mathematik.tu-chemnitz.de, Phone:+49-371-531-37728, Fax:+49-371-531-837728
}

\section{Introduction}
During the last decades, a huge number of papers deal with the approximation of multivariate functions.
On the one hand there is the classical approach that searches for optimal algorithms
that use samples for computing approximants. On the other hand one is interested in 
optimal algorithms that use general linear information of the function in order to construct
an approximant.
Usually, one considers the relation of the approximation error, that is measured in a specific norm,
to the number of used information of the function.
In this context, optimality often means best possible in the sense of an worst case approximation error, i.e.,
the worst case of the above mentioned relation with respect to all functions belonging to the unit ball of a
specific source space.

In this paper, we consider periodic functions and we measure the worst case error in the $L_\infty$ norm, similar
to \cite{KuWaWo08,CoKueSi16}, where the authors are interested in best possible approximations based on linear information,
and \cite{KuWaWo09,KuWaWo09b,ByDuSiUl14,ByKaUlVo16}, where the sampling errors, i.e., the approximation errors based on sampling methods,
are considered.
The constructive approaches of the latter ones use sparse grids or single rank-1 lattices as sampling schemes for determining upper and lower bounds on the sampling errors
in terms of the number of used samples. Often, the bounds only allow for asymptotic statements, where the concrete dependencies on, e.g., the dimension
is not explicitly stated.
Exceptions are the papers \cite{KuWaWo09}, where the authors use single rank-1 lattices as sampling schemes, and \cite{KuWaWo09b}, where a completely non-constructive proof is used to show the existence of suitable sampling sets for tractability considerations. In both papers, the dependencies on all parameters are determined, i.e.,
the presented upper bounds can also be used in order to give error bounds even in pre-asymptotic settings.
Unfortunately, the errors decay with a rate that is far away from optimal ones when increasing the number of 
used samples. We stress on the fact that the known upper bounds based on the single rank-1 lattice approach can not be improved significantly, cf.~\cite{ByKaUlVo16}.

Another recent paper~\cite{KaVo19} analyzes a similar approximation method as we investigate in this paper. It also presents estimates on the corresponding $L_\infty$ errors. However, that paper focuses on specific function spaces
of generalized mixed smoothness, which can be treated as special cases of the spaces we study herein. Moreover, the dependencies
of the approximation errors on the number of sampling values is considered only in asymptotics, i.e., the detailed (exponential) dependencies on the dimension are missing and thus the results do not allow for suitable estimates in pre-asymptotic settings.

At this point, we stress that many papers presenting sampling errors associated with constructive designs of the used sampling schemes treat specific fixed function spaces and usually focus on dominating mixed smoothness spaces.

This paper deals with a newly developed sampling strategy that uses sampling schemes
that are unions of several rank-1 lattices.
We present an approximation algorithm and analyze the arising approximation error in full detail.
To this end, we consider suitable reproducing kernel Hilbert spaces and determine
the corresponding approximation error of our sampling strategy in terms of a so called
worst case truncation error, which is actually the best possible worst case approximation error
one can achieve using general linear information, cf.~\cite{CoKueSi16}.

The remainder of the paper is structured as follows.
Section~\ref{sec:pre} gives a short overview on the fundamentals of our considerations.
In Section~\ref{sec:general_framework}, we analyze the new approximation approach
and prove the general framework for estimating associated sampling errors.
It turns out that the sampling error is almost as good as the best possible worst case approximation error, i.e.,
in our setting the approximation computed from the sampling values is -- up to a factor that depends only logarithmically on the number of approximated Fourier coefficients -- identical to the error that occurs when using the exact Fourier partial sum for approximation.
Subsequently, we apply the result to a highly popular type of reproducing kernel Hilbert spaces, often called Korobov spaces or Sobolev spaces of dominating mixed smoothness, that
are most widely used as illustrating examples during tractability considerations, in Section~\ref{sec:tractability}.
We improve tractability results presented in~\cite{KuWaWo09} for lattice algorithms.
In addition, the result of this paper even improves the known upper bounds on the rates of convergence
for tractable $L_\infty$ approximation based on sampling values in general, cf.~\cite{KuWaWo09b}. In fact, the convergence rates proved in this paper corresponds -- up to an arbitrary small $\epsilon$ -- to
the best possible convergence rates that can be achieved by algorithms that use general linear information, cf.~\cite{KuWaWo08} for details. Remark~\ref{rem:tractability} discusses the substantial improvements presented in this paper -- in particular with respect to tractability.

The insight into the calculations that occurs for Korobov spaces leads directly to 
the requirements  that need to be fulfilled in order to show a strong
relation between approximation numbers and sampling numbers, which we discuss in Section~\ref{sec:app_samp_numbers}.
Naturally, approximation numbers are bounded from above by sampling numbers.
Under mild assumptions on the considered function spaces, our general result from Section~\ref{sec:general_framework}
can be applied in order to estimate sampling numbers in terms of approximation numbers.
It turns out that the $L_\infty$ sampling numbers and approximation numbers may differ at most slightly,
cf.~\cite{CoKueSi16}. In more detail, the main rate of sampling and approximation numbers with respect to
the number of used information remain the same.

We stress the fact that all suggested algorithms, i.e., the algorithm for computing the approximant, cf. Algorithm~\ref{alg:compute_S_I_Lambda}, as well as the algorithm that determines suitable sampling schemes, cf. e.g. \cite[Algorithms~1 \&~3]{Kae17}, can be extremely efficiently performed with respect to the used arithmetic operations and thus offer a reasonable sampling strategy for practical applications.
On a final note, we would like to point out that the very cheap random construction of the suggested sampling sets
may fail with a certain small probability, but checking these requirements has the same complexity as the
suggested fast Fourier transform algorithm and, thus, is extremely efficient, cf.~\cite{Kae17}.

\section{Prerequisites}\label{sec:pre}

\subsection{Reproducing kernel Hilbert spaces}\label{sec:repro_kernel_hilbert_space}

In order to apply sampling strategies, we consider continuous periodic functions $f\,\colon\,\T^d\to \C$, $\T\sim[0,1)$, denote their Fourier coefficients by
\begin{align}
c_\boldh(f):=\int_{\T^d}f(\boldx)e^{-2\pi\ii\boldh\cdot\boldx}\mathrm{d}\boldx,
\end{align}
and think of the function $f$ as a Fourier series
$$
f(\boldx):=\sum_{\boldh\in\Z^d}c_\boldh(f)\e^{2\pi\ii\boldh\cdot\boldx},
$$
where $\boldh\cdot\boldx=\sum_{j=1}^dh_jx_j$ is the usual inner product in $\R^d$.
Furthermore, the function spaces under consideration are reproducing kernel Hilbert spaces, where we assume that the reproducing kernel $K_d\,\colon\,\T^d\times\T^d\to\C$ is given by
\begin{align*}
K_d(\boldx,\boldy):=\sum_{\boldh\in\Z^d}\frac{\e^{2\pi\ii\boldh\cdot(\boldx-\boldy)}}{r_d(\boldh)}.
\end{align*}
The occurring weight function 
$r_d\,\colon\,\Z^d\to(0,\infty)$ is subject to the restriction that
 $$\sum_{\boldh\in\Z^d}r_d(\boldh)^{-1}<\infty$$ holds, which guarantees the continuity of the positive definite kernel $K_d$.

Due to \cite{Aron50}, the positive definite kernel $K_d$ is indeed an reproducing kernel and it induces an inner product, i.e., $f(\boldy)=\langle f,K_d(\circ,\boldy)\rangle_d$ for all appropriate functions $f$, which is given by
\begin{align}
\langle f,g\rangle_d:=\sum_{\boldh\in\Z^d}c_\boldh(f)\overline{c_\boldh(g)}r_d(\boldh).
\label{eq:scalar_product}
\end{align}
The associated norm $\|f|\mathcal{H}_r(\T^d)\|:=\sqrt{\langle f,f\rangle_d}=\left(\sum_{\boldk\in\Z^d}r_d(\boldh)|c_\boldh(f)|^2\right)^{1/2}$ directly leads to the reproducing  kernel Hilbert space
\begin{align*}
\mathcal{H}_r(\T^d):=\left\{f\in L_1(\T^d)\,\colon\,\|f|\mathcal{H}_r(\T^d)\|<\infty\right\}
\end{align*}
of all functions $f$ for which the norm is finite.

\subsection{Multiple Rank-1 lattices}
Recently, a spatial discretization approach for multivariate trigonometric polynomials was presented in \cite{Kae17},
which we will utilize in order to compute approximations based on sampling values. To this end, we define
a rank-1 lattice
$$
\Lambda(\boldz,M):=\left\{\frac{j}{M}\boldz\bmod{\boldone}\,\colon\,j=0,\ldots,M-1\right\}\subset\T^d,
$$
where $M\in\N$ is called lattice size and $\boldz\in\Z^d$ is the generating vector of the rank-1 lattice.

One main advantage of rank-1 lattices is the group structure of the sampling set, which allows for fast Fourier transform algorithms, cf.~\cite{kaemmererdiss}. At the same time, this structure is the main disadvantage of a single rank-1 lattice since this structure causes excessive oversampling factors for spatial discretizations of specific trigonometric polynomials, cf. \cite[Chapter~3]{kaemmererdiss}, and -- as a consequence -- sampling rates that are far away from optimal ones, cf.~\cite{ByKaUlVo16}.

In order to avoid these disadvantages, we consider sampling sets that are unions of several rank-1 lattices
$$
\Lambda:=\Lambda(\boldz_1,M_1,\ldots,\boldz_L,M_L):=\bigcup_{\ell=1}^L\Lambda(\boldz_\ell,M_\ell),
$$
which we call \emph{multiple rank-1 lattice}. Our considerations are essentially based on the observation that
a multiple rank-1 lattice that fulfills the equality
\begin{align}
I=\bigcup_{\ell=1}^L\tilde{I}_\ell,\label{eq:reco_prop}
\end{align}
where $\tilde{I}_\ell:=\left\{\boldk\in I\,\colon\,\boldk\cdot\boldz_\ell\not\equiv\boldh\cdot\boldz_\ell\imod{M_\ell}\text{ for all }\boldh\in I\setminus\{\boldk\}\right\}$ depends on the rank-1 lattice  $\Lambda(\boldz_\ell,M_\ell)$,
is necessarily a spatial discretization for all trigonometric polynomials with frequency support in $I$, cf.~\cite{Kae16, Kae17} for details.
Moreover, we stress the fact that the number of sampling nodes within the multiple rank-1 lattice $\Lambda$ is bounded by
$|\Lambda|\le1+\sum_{\ell=1}^L(M_\ell-1)\le\sum_{\ell=1}^L M_\ell$. Under mild assumptions, $M_\ell$ can be chosen such that $M_\ell\lesssim|I|$ holds. In addition, the number $L$ of used rank-1 lattices can be bounded by $L\lesssim\log|I|$ in order to construct multiple rank-1 lattices that fulfill the reconstruction property in \eqref{eq:reco_prop}, cf.~\cite{Kae17}.
In this work, we equivalently modify the reconstruction property \eqref{eq:reco_prop} in order to simplify the theoretical considerations in Section~\ref{sec:general_framework}. We refer to Remark~\ref{rem:averaging_approx_fc}
for a detailed discussion on different reconstruction algorithms that could be used for approximation.

\section{General framework}\label{sec:general_framework}

The definition of
\begin{align}
I_\ell:=\left\{\boldh\in I\setminus\bigcupdot_{j=1}^{\ell-1} I_\ell\colon \boldh\cdot\boldz_\ell\not\equiv \boldk\cdot\boldz_\ell\imod{M_\ell}\text{ for all }\boldk\in I\setminus\{\boldh\}\right\}\label{eq:def_Iell}
\end{align}
guarantees that the sets $I_\ell$, $\ell=1,\ldots,L$, are disjoint.

For fixed $\boldh\in\bigcupdot_{\ell=1}^{L}I_\ell$ there is exactly one $l_\boldh$ for which $\boldh\in I_{\ell_\boldh}$ holds, i.e., the number
\begin{align}
\ell_\boldh\in\left\{\ell\in\{1,\ldots,L\}\colon \boldh\in I_\ell\right\}\label{eq:def_lh}
\end{align}
is already uniquely and well defined by \eqref{eq:def_lh}. A clearer style for defining $\ell_\boldh$ is
\begin{equation}
\ell_\boldh:=\max\left\{\ell\in\{1,\ldots,L\}\colon \boldh\in I_\ell\right\}=\min\left\{\ell\in\{1,\ldots,L\}\colon \boldh\in I_\ell\right\}.
\label{eq:def_lh1}
\end{equation}

\begin{algorithm}[tb]
\caption{Approximation of Fourier coefficients using multiple lattice rules.}\label{alg:compute_S_I_Lambda}
  \begin{tabular}{p{2.25cm}p{5cm}p{7cm}}
    Input: 	& $I\subset\Z^d$ 										&frequency set\\
    		& $\Lambda:=\Lambda(\boldz_1,M_1,\ldots,\boldz_L,M_L)$ 	& sampling nodes\\
	   		& $\left\{f(\boldx)\colon\boldx\in\Lambda\right\}$		& sampling values of the function $f$ \\
  \end{tabular}
  \begin{algorithmic}[1]
  \State Set $\tilde{I}=\emptyset$.
	\For{$\ell=1 \textnormal{ to }L$}
		\State Compute $\hat{g}_t^{(\ell)}:=\sum_{j=0}^{M_\ell-1}p\left(\frac{j}{M_\ell}\boldz\right)\e^{-2\pi i\frac{jt}{M_\ell}}$, $t=0,\ldots,M_\ell-1$, using a 1d FFT.
		\State Determine $I_\ell:=\left\{\boldk\in I\setminus\tilde{I}\;\colon\; \boldk\cdot\boldz_\ell\not\equiv \boldh\cdot\boldz_\ell\imod{M_\ell}\text{ for all }\boldh\in I\setminus\{\boldk\}\right\}$
		\ForEach{$\boldh\in I_\ell$}
			\State	Set $\hat{f}_\boldh:=\hat{g}_{\boldh\cdot\boldz_\ell\bmod{M_\ell}}^{(\ell)}$.
		\EndFor
		\State Set $\tilde{I}:=\tilde{I}\cupdot I_\ell$.
  	\EndFor
	\end{algorithmic}
	\begin{tabular}{p{2.25cm}p{3cm}p{9cm}}
	Output: & $\tilde{I}\subset I$								& frequencies of uniquely reconstructable\\ &&Fourier coefficients for $f\in\Pi_I$ and\\
	& $\{\hat{f}_\boldh\}_{\boldh\in\tilde{I}}$			& corresponding approximated Fourier coefficients\\
    \cmidrule{1-3}
	  Complexity: & \multicolumn{2}{p{12cm}}{$\OO{L(\tilde{M}\log \tilde{M}+|I|(d+\log|I|))}$, where $\tilde{M}:=\max\{M_\ell\,\colon\,\ell=1,\ldots,L\}$}
	\end{tabular}
\end{algorithm}

We compute the approximation of a function $f\in \mathcal{H}_r(\T^d)$ using Algorithm \ref{alg:compute_S_I_Lambda} and get
the approximant
$$
S_I^\Lambda f(\boldx)=\sum_{\ell=1}^{L}\sum_{\boldh\in I_\ell}\hat{f}_\boldh\e^{2\pi\ii\boldh\cdot\boldx},
$$
where the approximated Fourier coefficients are computed by
\begin{align}
\hat{f}_\boldh:=\frac{1}{M_{\ell_\boldh}}\sum_{j=0}^{M_{\ell_\boldh}-1}f\left(\frac{j\boldz_{\ell_\boldh}}{M_{\ell_\boldh}}\right)\e^{-2\pi\ii j\frac{\boldh\cdot\boldz_{\ell_\boldh}}{M_{\ell_\boldh}}}
=\sum_{\substack{\boldk\in\Z^d\\\boldk\cdot\boldz_{\ell_\boldh}\equiv\boldh\cdot\boldz_{\ell_\boldh}\imod{M_{\ell_\boldh}}}}c_\boldk(f).\label{eq:alias_Lambda_l}
\end{align}
Using an efficient sort algorithm for determining the frequency sets $I_\ell$ and one-dimensional fast Fourier transforms of lengths $M_\ell$, $\ell=1,\ldots,L$, for computing all the numbers $\hat{g}_t^{(\ell)}$ leads to the arithmetic complexity $\OO{L|I|(d+\log|I|)+\sum_{\ell=1}^LM_\ell\log M_\ell}$ of Algorithm~\ref{alg:compute_S_I_Lambda}, cf.~\cite{Kae16}.

The equality in \eqref{eq:alias_Lambda_l} holds due to the well known aliasing properties of rank-1 lattices, cf.\ \cite[Theorem~2.8]{SlJo94} and \cite[Section~3.4]{kaemmererdiss}.
At this point, we stress that we observe $S_I^\Lambda f\in\Pi_{\tilde{I}}\subseteq \Pi_I$, $\tilde{I}:=\bigcupdot_{\ell=1}^L I_\ell$, and 
that $\tilde{I}\subsetneq I$ might hold, in general.
In the following, we assume $\tilde{I}=I$, which actually is the crucial characteristic of the used multiple rank\mbox{-}1 lattice $\Lambda(\boldz_1,M_1,\ldots,\boldz_L,M_L)$. This assumption will prove to be very beneficial in the following considerations of the pointwise error of a function $f\in\mathcal{H}_r(\T^d)$ and its approximation $S_I^\Lambda f$. We determine
\begin{align}
(f&-\operatorname{S}_I^\Lambda f)(\boldx)\nonumber\\
&=\sum_{h\not\in I}c_\boldh(f)\e^{2\pi\ii\boldh\cdot\boldx}+\underbrace{\sum_{\boldh\in I}\left(c_\boldh(f)-\frac{1}{M_{\ell_\boldh}}\sum_{j=0}^{M_{\ell_\boldh}-1}f\left(\frac{j\boldz_{\ell_\boldh}}{M_{\ell_\boldh}}\right)\e^{-2\pi\ii j\boldh\cdot\boldz_{\ell_\boldh}/M_{\ell_\boldh}}\right)\e^{2\pi\ii\boldh\cdot\boldx}}_{=:\operatorname{R}_I^\Lambda f}\label{eq:aliasing_formula}.
\end{align}

The first summand in \eqref{eq:aliasing_formula} is called the \emph{truncation error} and is inevitable when approximating the function $f$ by a trigonometric polynomial with frequencies supported on the set $I$.
Thus the main focus is on estimating the second summand in \eqref{eq:aliasing_formula}, which is denoted by $\operatorname{R}_I^\Lambda f$. For that reason, we consider the Fourier coefficients of the trigonometric polynomial $\operatorname{R}_I^\Lambda f$. We follow the considerations in \cite{KuSlWo06} and observe
\begin{align}
&c_\boldh(f)-\frac{1}{M_{\ell_\boldh}}\sum_{j=0}^{M_{\ell_\boldh}-1}f\left(\frac{j\boldz_{\ell_\boldh}}{M_{\ell_\boldh}}\right)\e^{-2\pi\ii j\boldh\cdot\boldz_{\ell_\boldh}/M_{\ell_\boldh}}=\langle f,\tau_\boldh\rangle_d,\nonumber
\intertext{where $\tau_\boldh$, $\boldh\in I$, is defined by}
\tau_\boldh(\boldt)&:=\int_{\T^d}K_d(\boldt,\boldx)\e^{2\pi\ii\boldh\cdot\boldx}\mathrm{d}\boldx-\frac{1}{M_{\ell_\boldh}}\sum_{j=0}^{M_{\ell_\boldh}-1}K_d\left(\boldt,\frac{j\boldz_{\ell_\boldh}}{M_{\ell_\boldh}}\right)\e^{2\pi\ii j\boldh\cdot\boldz_{\ell_\boldh}/M_{\ell_\boldh}}\nonumber
\intertext{since $K_d$ is the reproducing kernel. Exploiting the linearity of the scalar product $\langle\circ,\circ\rangle_d$, cf. \eqref{eq:scalar_product}, yields}
\tau_\boldh(\boldt)&=-\sum_{\substack{\boldk\in\Z^d\setminus\{\boldh\}\\\boldh\cdot\boldz_{\ell_\boldh}\equiv\boldk\cdot\boldz_{\ell_\boldh}\imod{M_{\ell_\boldh}}}}\frac{\e^{2\pi\ii\boldk\cdot\boldt}}{r_d(\boldk)}\label{eq:tau_h_in_I}
\end{align}
for all $\boldh\in I$.
For fixed $\boldx$, we apply the Cauchy--Schwarz inequality and achieve the estimate
\begin{align}
|(f-\operatorname{S}_I^\Lambda f)(\boldx)|=
\left|
\left\langle
f,\sum_{\boldh\in\Z^d}\tau_\boldh \e^{2\pi\ii\boldh\cdot\boldx}
\right\rangle_d
\right|
\le\left\|f\,|\mathcal{H}_r(\T^d)\right\|\left\|\sum_{\boldh\in\Z^d}\tau_\boldh\e^{2\pi\ii\boldh\cdot\boldx}\,\bigg.\bigg|\mathcal{H}_r(\T^d)\right\|
\label{eq:wce_derivation}
\end{align}
where the functions $\tau_\boldh$  are defined by
\begin{align*}
\tau_\boldh(\boldt)&=
\begin{cases}
\text{given in \eqref{eq:tau_h_in_I}}, & \boldh\in I,\\
\int_{\T^d}K_d(\boldt,\boldx)\e^{2\pi\ii\boldh\cdot\boldx}\mathrm{d}
\boldx=\frac{\e^{2\pi\ii\boldh\cdot\boldt}}{r_d(\boldh)}, & \boldh\in \Z^d\setminus I.
\end{cases}
\end{align*}
We stress that each $\tau_\boldh$ depends on the spatial variable $\boldt$ and that the norm in \eqref{eq:wce_derivation} is taking with respect to this variable.

Taking \eqref{eq:wce_derivation} into account, the worst case error measured in the $L_\infty(\T^d)$ norm of a function $f\in\mathcal{H}_r(\T^d)$ within the unit ball of $\mathcal{H}_r(\T^d)$ is bounded from above by
\begin{align*}
\sup_{\|f|\mathcal{H}_r(\T^d)\|\le 1}\|f-\operatorname{S}_I^\Lambda f|L_\infty(\T^d)\|
\le\sup_{\boldx\in\T^d}\left\|\sum_{\boldh\in\Z^d}\tau_\boldh\e^{2\pi\ii\boldh\cdot\boldx}\bigg.\bigg|\mathcal{H}_r(\T^d)\right\|.
\end{align*}
For ease of notation, we write $\boldh\not\in I$ for $\boldh\in\Z^d\setminus I$ in the following.
We estimate
\begin{align}
\sup_{\|f|\mathcal{H}_r(\T^d)\|\le 1}&\|f-\operatorname{S}_I^\Lambda f|L_\infty(\T^d)\|^2
\le\sup_{\boldx\in\T^d}\left|\sum_{\boldh\in\Z^d}\sum_{\boldk\in\Z^d}\langle\tau_\boldh,\tau_\boldk\rangle_d\e^{2\pi\ii(\boldh+\boldk)\cdot\boldx}\right|\nonumber\\
&=
\sup_{\boldx\in\T^d}\Bigg|
\sum_{\boldh\not\in I}\sum_{\boldk\not\in I}\langle\tau_\boldh,\tau_\boldk\rangle_d\e^{2\pi\ii(\boldh+\boldk)\cdot\boldx}
+\sum_{\boldh\in I}\sum_{\boldk\in I}\langle\tau_\boldh,\tau_\boldk\rangle_d\e^{2\pi\ii(\boldh+\boldk)\cdot\boldx}\nonumber\\
&\quad+\sum_{\boldh\not\in I}\sum_{\boldk\in I}\langle\tau_\boldh,\tau_\boldk\rangle_d\e^{2\pi\ii(\boldh+\boldk)\cdot\boldx}
+\sum_{\boldh\in I}\sum_{\boldk\not\in I}\langle\tau_\boldh,\tau_\boldk\rangle_d\e^{2\pi\ii(\boldh+\boldk)\cdot\boldx}
\Bigg|\nonumber\\
&\le
\underbrace{\sum_{\boldh\not\in I}\sum_{\boldk\not\in I}|\langle\tau_\boldh,\tau_\boldk\rangle_d|}_{=:\Sigma_{\bcancel{I}\bcancel{I}}}
+2\underbrace{\sum_{\boldh\not\in I}\sum_{\boldk\in I}|\langle\tau_\boldh,\tau_\boldk\rangle_d|}_{=:\Sigma_{\bcancel{I}I}}
+\underbrace{\sum_{\boldh\in I}\sum_{\boldk\in I}|\langle\tau_\boldh,\tau_\boldk\rangle_d|}_{=:\Sigma_{I I}}
\label{eq:split_wceinfty}.
\end{align}
We individually treat the summands $\Sigma_{\bcancel{I}\bcancel{I}}$, $\Sigma_{\bcancel{I}I}$, and $\Sigma_{I I}$
and start with the simplest case.
Formula \eqref{eq:scalar_product} implies
\begin{align*}
\langle\tau_\boldh,\tau_\boldk\rangle_d=\begin{cases}
0,&\boldh\neq\boldk\\
r_d(\boldh)^{-1},&\boldh=\boldk
\end{cases}
\end{align*}
for $\boldh\not\in I$ and $\boldk\not\in I$, which directly yields
\begin{align}
\Sigma_{\bcancel{I}\bcancel{I}}=
\sum_{\boldh\not\in I}\sum_{\boldk\not\in I}|\langle\tau_\boldh,\tau_\boldk\rangle_d|
=\sum_{\boldh\in\Z^d\setminus I}r_d(\boldh)^{-1}.\label{eq:trunc_error}
\end{align}

We call the term in \eqref{eq:trunc_error} the \emph{worst case truncation error}.
The strategy of the following considerations is to
estimate the terms $\Sigma_{\bcancel{I}I}$ and $\Sigma_{I I}$
by terms in the worst case truncation error $\Sigma_{\bcancel{I}\bcancel{I}}$.
To this end, we analyze \eqref{eq:tau_h_in_I} in more detail using
specific Kronecker delta functions.
\begin{definition}
For each $\boldk\in\Z^d$, $\Lambda(\boldz_1,M_1,\ldots,\boldz_L,M_L)$ a multiple rank-1 lattice and $\ell\in\{1,\ldots,L\}$, we define the Kronecker delta functions $\delta_\boldk^{(\ell)}\colon \Z^d\to\{0,1\}$ by
$$\delta_\boldk^{(\ell)}(\boldh)=
\begin{cases}
1, & \boldk\in I_\ell,\;\boldk\neq\boldh\in\Z^d,\;\text{and }\boldh\cdot\boldz_\ell\equiv\boldk\cdot\boldz_\ell\imod{M_\ell};\\
0, & \text{otherwise}.
\end{cases}
$$
\end{definition}
We determine some helpful properties of the introduced Kronecker delta functions.
\begin{lemma}\label{lem:properties_of_delta}
Let a frequency index set $I\subset\Z^d$, $|I|<\infty$, and a multiple rank-1 lattice $\Lambda(\boldz_1,M_1,\ldots,\boldz_L,M_L)$ be given.
The frequency sets $I_\ell$, $\ell=1,\ldots,L$, are determined as specified in \eqref{eq:def_Iell}.
Then the following hold.
\begin{itemize}
\item
For each $\boldk\in I_\ell$, we characterize the frequency set of aliasing Fourier coefficients by
\begin{align}
\left\{\boldh\in\Z^d\setminus\{\boldk\}\colon\boldh\cdot\boldz_\ell\equiv\boldk\cdot\boldz_\ell\imod{M_\ell}\right\}
=\{\boldh\in\Z^d\colon
\delta_\boldk^{(\ell)}(\boldh)=1\},\label{eq:aliasing_set_delta}
\end{align}
using the above defined Kronecker delta functions $\delta_\boldk^{(\ell)}$.
\item
Furthermore, the equality 
\begin{equation}
\{\boldh\in\Z^d\colon
\delta_\boldk^{(\ell)}(\boldh)=1\}=\emptyset\quad\text{for each}\quad
\boldk\in\Z^d\setminus I_\ell\label{equ:empty_aliasing}
\end{equation}
holds.
\item
For
fixed $\boldk\in\Z^d$ and fixed $\ell\in\{1,\ldots,L\}$ we observe
\begin{align}
\{\boldh\in\Z^d\colon
\delta_\boldk^{(\ell)}(\boldh)=1\}\cap I=\emptyset.
\label{eq:aliasing_within_I}
\end{align}
\item
Moreover, for each fixed $\ell\in\{1,\ldots,L\}$ and each fixed $\boldh\in\Z^d$ we have
\begin{align}
\sum_{\boldk\in I_\ell}\delta^{(\ell)}_\boldk(\boldh)\in\{0,1\},\label{eq:aliasing_summation}
\end{align}
\item
which implies for fixed $\boldh\in\Z^d$
\begin{align}
0\le\sum_{\boldk\in I}\sum_{\ell=1}^L\delta^{(\ell)}_\boldk(\boldh)=\sum_{\boldk\in \bigcupdot_{\ell=1}^L I_\ell}\delta_\boldk^{(\ell_\boldk)}(\boldh)=\sum_{\ell=1}^L\sum_{\boldk\in I_\ell}\delta^{(\ell)}_\boldk(\boldh)\le L\label{eq:aliasing_summation_L}.
\end{align}
\end{itemize}

\end{lemma}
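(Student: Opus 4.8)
The plan is to derive all five assertions directly from the definition of $\delta_\boldk^{(\ell)}$ together with the \emph{selection property} encoded in \eqref{eq:def_Iell}: a frequency $\boldk$ lands in $I_\ell$ only if no \emph{other} element of $I$ aliases with it modulo $M_\ell$ along $\boldz_\ell$. I would treat the assertions in the order stated, since the later ones reuse the earlier ones.

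First I would settle \eqref{eq:aliasing_set_delta} and \eqref{equ:empty_aliasing} by merely unwinding the definition. Fixing $\boldk\in I_\ell$, the membership clause $\boldk\in I_\ell$ in the definition of $\delta_\boldk^{(\ell)}$ is automatically satisfied, so $\delta_\boldk^{(\ell)}(\boldh)=1$ reduces to the two conditions $\boldh\neq\boldk$ and $\boldh\cdot\boldz_\ell\equiv\boldk\cdot\boldz_\ell\imod{M_\ell}$, which is exactly the set on the left-hand side of \eqref{eq:aliasing_set_delta}. Conversely, if $\boldk\notin I_\ell$ the membership clause fails for every $\boldh$, so $\delta_\boldk^{(\ell)}\equiv0$ and \eqref{equ:empty_aliasing} follows at once.

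Next I would prove \eqref{eq:aliasing_within_I}. If $\boldk\notin I_\ell$ the set is empty by \eqref{equ:empty_aliasing} and there is nothing to show, so I would assume $\boldk\in I_\ell$ and use the description \eqref{eq:aliasing_set_delta}. The defining property \eqref{eq:def_Iell} states that for every $\boldh\in I\setminus\{\boldk\}$ one has $\boldh\cdot\boldz_\ell\not\equiv\boldk\cdot\boldz_\ell\imod{M_\ell}$; hence no element of $I$ other than $\boldk$ satisfies the aliasing congruence, and $\boldk$ itself is excluded because $\delta_\boldk^{(\ell)}$ requires $\boldh\neq\boldk$. Therefore the aliasing set meets $I$ in the empty set.

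For \eqref{eq:aliasing_summation} I would argue that, for fixed $\ell$ and $\boldh$, at most one $\boldk\in I_\ell$ can satisfy $\delta_\boldk^{(\ell)}(\boldh)=1$: if two distinct $\boldk_1,\boldk_2\in I_\ell$ both aliased with $\boldh$, then $\boldk_1\cdot\boldz_\ell\equiv\boldh\cdot\boldz_\ell\equiv\boldk_2\cdot\boldz_\ell\imod{M_\ell}$, and since $\boldk_1\in I_\ell$ while $\boldk_2\in I\setminus\{\boldk_1\}$, this contradicts \eqref{eq:def_Iell}; hence the sum lies in $\{0,1\}$. Finally, \eqref{eq:aliasing_summation_L} follows by bookkeeping: the sets $I_\ell$ are pairwise disjoint, so a fixed $\boldk\in I$ lies in at most one $I_\ell$, and $\delta_\boldk^{(\ell)}$ vanishes whenever $\boldk\notin I_\ell$. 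This collapses $\sum_{\boldk\in I}\sum_{\ell=1}^L$ to the sum over $\boldk\in\bigcupdot_{\ell=1}^L I_\ell$ of the single surviving term $\delta_\boldk^{(\ell_\boldk)}(\boldh)$, which equals $\sum_{\ell=1}^L\sum_{\boldk\in I_\ell}\delta_\boldk^{(\ell)}(\boldh)$ upon reindexing the disjoint union; bounding each inner sum by $1$ via \eqref{eq:aliasing_summation} yields the upper bound $L$, while non-negativity of the summands gives the lower bound $0$. The only genuinely substantive point — and hence the part I would write most carefully — is the uniqueness argument shared by \eqref{eq:aliasing_within_I} and \eqref{eq:aliasing_summation}, namely that the selection rule \eqref{eq:def_Iell} forbids a second element of $I$ from aliasing with any $\boldk\in I_\ell$.
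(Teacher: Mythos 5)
Your proposal is correct and follows essentially the same route as the paper's own proof: unwinding the definition of $\delta_\boldk^{(\ell)}$ for \eqref{eq:aliasing_set_delta} and \eqref{equ:empty_aliasing}, invoking the no-aliasing-within-$I$ clause of \eqref{eq:def_Iell} for \eqref{eq:aliasing_within_I}, the identical contradiction argument (two distinct elements of $I_\ell$ aliasing with a common $\boldh$ would alias with each other) for \eqref{eq:aliasing_summation}, and the same disjointness bookkeeping for \eqref{eq:aliasing_summation_L}. You correctly identify the uniqueness argument as the only substantive step; no gaps.
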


\begin{proof}
For $\boldk\in\Z^d\setminus I_\ell$ the Kronecker delta function $\delta_\boldk^{(\ell)}$ maps to zero for each $\boldh\in\Z^d$. Accordingly, we observe \eqref{equ:empty_aliasing}.

On the other hand, for $\boldk\in I_\ell$ the function value $\delta_\boldk^{(\ell)}(\boldh)$ is one exactly for those $\boldh\in\Z^d\setminus\{\boldk\}$, that fulfill the aliasing formula for $\Lambda(\boldz_\ell,M_\ell)$, namely 
$\boldh\cdot\boldz_\ell\equiv\boldk\cdot\boldz_\ell\imod{M_\ell}$, which characterizes the set at the left hand side in \eqref{eq:aliasing_set_delta}.

Due to the definition of the set
$$
I_\ell:=\left\{\boldh\in I\setminus\bigcupdot_{j=1}^{\ell-1} I_\ell\colon \boldh\cdot\boldz_\ell\not\equiv \boldk\cdot\boldz_\ell\imod{M_\ell}\text{ for all }\boldk\in I\setminus\{\boldh\}\right\}
$$
each $\boldk\in I_\ell$ has no aliasing element in $I$. Accordingly, for $\boldk\in I_\ell$ the equality in \eqref{eq:aliasing_within_I} holds. For $\boldk\in\Z^d\setminus I_\ell$ the equality also holds, since the set characterized by the Kronecker delta functions is already the empty set.

In order to prove \eqref{eq:aliasing_summation}, we have to show that each $\boldh\in\Z^d$ aliases to at most one
$\boldk\in I_\ell$. 
We assume the contrary, i.e., 
assume there exists $\boldh\in\Z^d$ such that there are $\boldk,\boldk'\in I_\ell\subset I$, $\boldk\neq\boldk'$, with $\delta_\boldk^{(\ell)}(\boldh)=1=\delta_{\boldk'}^{(\ell)}(\boldh)$, which implies $\boldk\neq\boldh\neq\boldk'$ and 
$\boldk\cdot\boldz_\ell\equiv\boldh\cdot\boldz_\ell\equiv\boldk'\cdot\boldz_\ell\imod{M_\ell}$. Accordingly, there is an aliasing element $\boldk'\in I$ for $\boldk\in I_\ell$,
which prohibits $\boldk$ from belonging to $I_\ell$, due to its definition. Thus, we observe the contradiction to our assumptions. Consequently, for each $\boldh\in\Z^d$ at most one $\delta_\boldk^{(\ell)}(\boldh)$, $\boldk\in I_\ell$ and $\ell$ fixed, can be nonzero. Moreover, for $|I_\ell|<M_\ell$ there exist $M_\ell-|I_\ell|$ of the disjoint sets
$$\{\boldh\in\Z^d\colon j\equiv\boldh\cdot\boldz_\ell\imod{M_\ell}\}\qquad j\in\{0,\ldots,M_\ell-1\}$$
that do not contain an element from $I_\ell$. For elements $\boldh\in\Z^d$ of these specific sets we observe
$\sum_{\boldk\in I_\ell}\delta^{(\ell)}_\boldk(\boldh)=0$.
According to that, the term $\sum_{\boldk\in I_\ell}\delta^{(\ell)}_\boldk(\boldh)$ is a non-negative integer of at least zero and at most one.
\newline
Finally, we prove \eqref{eq:aliasing_summation_L}.
For $\boldk\in I\setminus\bigcupdot_{\ell=1}^L I_\ell$
we observe $\delta^{(\ell)}_\boldk(\boldh)=0$ for all $\ell\in\{1,\ldots,L\}$, for $\boldk\in \bigcupdot_{\ell=1}^L I_\ell$, we have 
$\delta^{(\ell)}_\boldk(\boldh)=0$ for all $\ell\in\{1,\ldots,L\}\setminus\{\ell_\boldk\}$, which
justifies the first equality. The same observation yields the second equality
\begin{align*}
\sum_{\boldk\in I}\sum_{j=1}^L\delta_\boldk^{(j)}(\boldh)
=\sum_{\ell=1}^L\sum_{\boldk\in I_\ell}\sum_{j=1}^L\delta_\boldk^{(j)}(\boldh)
=\sum_{\ell=1}^L\sum_{\boldk\in I_\ell}\delta_\boldk^{(\ell)}(\boldh).
\end{align*}
The inequalities follow from \eqref{eq:aliasing_summation}.
\end{proof}
The introduced Kronecker delta functions $\delta_\boldk^{(\ell)}$ allow for concise
characterizations of the aliasing effects of the sampling method under consideration.
We exploit the observations of Lemma~\ref{lem:properties_of_delta} in order to estimate
the terms $\Sigma_{\bcancel{I}I}$ and $\Sigma_{II}$.
\begin{lemma}\label{lem:est_sum_cI_I}
Let $\Lambda(\boldz_1,M_1,\ldots,\boldz_L,M_L)$ be a multiple rank-1 lattice
such that $I=\bigcupdot_{\ell=1}^{L}I_\ell$, where $I_\ell$ is defined in \eqref{eq:def_Iell}.
Then we have
$$\Sigma_{\bcancel{I}I}\le L\Sigma_{\bcancel{I}\bcancel{I}}.$$
\end{lemma}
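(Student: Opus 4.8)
The plan is to compute the individual inner products $\langle\tau_\boldh,\tau_\boldk\rangle_d$ that make up $\Sigma_{\bcancel{I}I}$ explicitly and then to collapse the double sum using the counting bound \eqref{eq:aliasing_summation_L}. First I would rewrite, for $\boldk\in I=\bigcupdot_{\ell=1}^L I_\ell$ with its uniquely determined index $\ell_\boldk$, the function $\tau_\boldk$ from \eqref{eq:tau_h_in_I} in terms of the Kronecker delta functions. By the characterization \eqref{eq:aliasing_set_delta}, the frequency support of $\tau_\boldk$ is exactly $\{\boldk'\in\Z^d\colon\delta_\boldk^{(\ell_\boldk)}(\boldk')=1\}$, so its Fourier coefficients are $c_{\boldk'}(\tau_\boldk)=-\delta_\boldk^{(\ell_\boldk)}(\boldk')/r_d(\boldk')$. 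In contrast, for $\boldh\not\in I$ the function $\tau_\boldh$ is the single exponential $\e^{2\pi\ii\boldh\cdot\boldt}/r_d(\boldh)$, whose only nonvanishing Fourier coefficient is $c_\boldh(\tau_\boldh)=1/r_d(\boldh)$.

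Next I would insert these coefficients into the scalar product \eqref{eq:scalar_product}. Because $\tau_\boldh$ has a single nonzero Fourier coefficient, located at the frequency $\boldh$, the sum over $\Z^d$ collapses to the single term at that frequency, and since $r_d$ takes positive real values this yields $\langle\tau_\boldh,\tau_\boldk\rangle_d=-\delta_\boldk^{(\ell_\boldk)}(\boldh)/r_d(\boldh)$, hence $|\langle\tau_\boldh,\tau_\boldk\rangle_d|=\delta_\boldk^{(\ell_\boldk)}(\boldh)/r_d(\boldh)$. The disjointness statement \eqref{eq:aliasing_within_I} guarantees that the support of each $\tau_\boldk$ ($\boldk\in I$) avoids $I$, so restricting $\boldh$ to $\Z^d\setminus I$ is exactly the right range and nothing is lost.

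Finally I would interchange the order of summation and invoke \eqref{eq:aliasing_summation_L}. Writing
$$\Sigma_{\bcancel{I}I}=\sum_{\boldh\not\in I}\frac{1}{r_d(\boldh)}\sum_{\boldk\in I}\delta_\boldk^{(\ell_\boldk)}(\boldh),$$
the inner sum over $\boldk\in I=\bigcupdot_{\ell=1}^L I_\ell$ is precisely the middle expression in \eqref{eq:aliasing_summation_L} and is therefore at most $L$ for every fixed $\boldh$. Pulling out the factor $L$ and recognizing the remaining sum $\sum_{\boldh\in\Z^d\setminus I}r_d(\boldh)^{-1}$ as $\Sigma_{\bcancel{I}\bcancel{I}}$ via \eqref{eq:trunc_error} gives the claimed bound $\Sigma_{\bcancel{I}I}\le L\,\Sigma_{\bcancel{I}\bcancel{I}}$. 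I do not expect a genuine obstacle here; the only point demanding care is the bookkeeping of Fourier supports, namely confirming through \eqref{eq:aliasing_set_delta} and \eqref{eq:aliasing_within_I} that the coefficient of $\tau_\boldk$ at a given $\boldh\not\in I$ is captured exactly by $\delta_\boldk^{(\ell_\boldk)}(\boldh)$, so that the per-frequency counting estimate \eqref{eq:aliasing_summation_L} applies verbatim.
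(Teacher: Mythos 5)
Your proposal is correct and follows essentially the same route as the paper: you establish the key identity $\langle\tau_\boldh,\tau_\boldk\rangle_d=-\delta_\boldk^{(\ell_\boldk)}(\boldh)/r_d(\boldh)$ (the paper's \eqref{eq:sp_InI}, obtained there by expanding $\tau_\boldk$ via linearity of the inner product, by you equivalently via the Fourier coefficients in \eqref{eq:scalar_product}), and then bound the collapsed double sum by $L$ per frequency using \eqref{eq:aliasing_summation_L}. The bookkeeping you flag, including the role of \eqref{eq:aliasing_within_I}, is exactly what the paper relies on, so there is no gap.
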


\begin{proof}
Let $\boldh\not\in I$, $\boldk\in I$ be given. Due to $I=\bigcupdot_{\ell=1}^{L}I_\ell$ 
there exists a unique $\ell_\boldk$, cf. \eqref{eq:def_lh} and \eqref{eq:def_lh1}, such that $\boldk\in I_{\ell_\boldk}$.
Accordingly, we observe
\begin{align}
\langle\tau_\boldh,\tau_\boldk\rangle_d&=
\left\langle
\frac{\e^{2\pi\ii\boldh\cdot\circ}}{r_d(\boldh)},
-\sum_{\substack{\boldk'\in\Z^d\setminus\{\boldk\}\\\boldk'\cdot\boldz_{\ell_\boldk}\equiv\boldk\cdot\boldz_{\ell_\boldk}\imod{M_{\ell_\boldk}}}}
\frac{\e^{2\pi\ii\boldk'\cdot\circ}}{r_d(\boldk')}
\right\rangle_d\nonumber\\
&=
-\sum_{\substack{\boldk'\in\Z^d\setminus\{\boldk\}\\\boldk'\cdot\boldz_{\ell_\boldk}\equiv\boldk\cdot\boldz_{\ell_\boldk}\imod{M_{\ell_\boldk}}}}
\left\langle
\frac{\e^{2\pi\ii\boldh\cdot\circ}}{r_d(\boldh)},
\frac{\e^{2\pi\ii\boldk'\cdot\circ}}{r_d(\boldk')}
\right\rangle_d
=
-\frac{\delta_{\boldk}^{({\ell_\boldk})}(\boldh)}{r_d(\boldh)}\label{eq:sp_InI}.
\end{align}
Consequently, we have
\begin{align*}
\Sigma_{\bcancel{I}I}&:=
\sum_{\boldh\not\in I}\sum_{\boldk\in I}|\langle\tau_\boldh,\tau_\boldk\rangle_d|=
\sum_{\boldh\not\in I}r_d(\boldh)^{-1}\sum_{\ell=1}^L\sum_{\boldk\in I_\ell}\delta_{\boldk}^{(\ell)}(\boldh)
\stackrel{\eqref{eq:aliasing_summation_L}}{\le}
L\Sigma_{\bcancel{I}\bcancel{I}}
\end{align*}
\end{proof}

\begin{lemma}\label{lem:est_sum_I_I}
Let $\Lambda(\boldz_1,M_1,\ldots,\boldz_L,M_L)$ be a multiple rank-1 lattice
such that $I=\bigcupdot_{\ell=1}^{L}I_\ell$, where $I_\ell$ is defined in \eqref{eq:def_Iell}.
Then we have
$$\Sigma_{I I}\le L^2\Sigma_{\bcancel{I}\bcancel{I}}.$$
\end{lemma}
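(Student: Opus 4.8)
The plan is to mirror the computation that produced \eqref{eq:sp_InI} in Lemma~\ref{lem:est_sum_cI_I}, but now for two frequencies both lying in $I$, and then to exploit the non-negativity of the resulting inner products so that the double sum collapses into a perfect square. First I would fix $\boldh,\boldk\in I$ and use the uniqueness of $\ell_\boldh$ and $\ell_\boldk$ together with \eqref{eq:tau_h_in_I} and \eqref{eq:aliasing_set_delta} to write each $\tau_\boldh$ as a single Fourier series $\tau_\boldh(\boldt)=-\sum_{\boldp\in\Z^d}\frac{\delta_\boldh^{(\ell_\boldh)}(\boldp)}{r_d(\boldp)}\e^{2\pi\ii\boldp\cdot\boldt}$, so that its Fourier coefficient at $\boldp$ equals $-\delta_\boldh^{(\ell_\boldh)}(\boldp)/r_d(\boldp)$. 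Plugging this into the scalar product \eqref{eq:scalar_product} and cancelling one factor of $r_d(\boldp)$ gives
$$\langle\tau_\boldh,\tau_\boldk\rangle_d=\sum_{\boldp\in\Z^d}\frac{\delta_\boldh^{(\ell_\boldh)}(\boldp)\,\delta_\boldk^{(\ell_\boldk)}(\boldp)}{r_d(\boldp)}\ge 0,$$
a real non-negative number, since the Kronecker deltas and the weights are non-negative.

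The crucial consequence is that the absolute values in the definition of $\Sigma_{II}$ are superfluous, i.e.\ $|\langle\tau_\boldh,\tau_\boldk\rangle_d|=\langle\tau_\boldh,\tau_\boldk\rangle_d$. Hence I can interchange the (now sign-definite) summations freely and obtain
$$\Sigma_{II}=\sum_{\boldh\in I}\sum_{\boldk\in I}\langle\tau_\boldh,\tau_\boldk\rangle_d=\sum_{\boldp\in\Z^d}\frac{1}{r_d(\boldp)}\left(\sum_{\boldh\in I}\delta_\boldh^{(\ell_\boldh)}(\boldp)\right)^2,$$
recognising the inner double sum over $\boldh,\boldk$ as the square of $\sum_{\boldh\in I}\delta_\boldh^{(\ell_\boldh)}(\boldp)$. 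This factorisation into a perfect square is the heart of the argument, and it is exactly what the non-negativity buys us.

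It then remains to bound the square and to control the range of $\boldp$. By \eqref{eq:aliasing_within_I}, every $\boldp$ for which some $\delta_\boldh^{(\ell_\boldh)}(\boldp)$ is nonzero must lie outside $I$, so the bracket vanishes whenever $\boldp\in I$ and the $\boldp$-sum effectively runs only over $\boldp\notin I$. For those remaining $\boldp$, estimate \eqref{eq:aliasing_summation_L} yields $\sum_{\boldh\in I}\delta_\boldh^{(\ell_\boldh)}(\boldp)\le L$, whence the squared bracket is at most $L^2$. Substituting and comparing with \eqref{eq:trunc_error} gives
$$\Sigma_{II}\le L^2\sum_{\boldp\notin I}r_d(\boldp)^{-1}=L^2\,\Sigma_{\bcancel{I}\bcancel{I}},$$
which is the claim. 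I expect the only genuine obstacle to be the bookkeeping in the first step, namely correctly reading off the Fourier coefficient of $\tau_\boldh$ through \eqref{eq:aliasing_set_delta} and verifying its non-negativity; once the perfect square appears, the remaining estimates are immediate applications of Lemma~\ref{lem:properties_of_delta}.
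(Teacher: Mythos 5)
Your proof is correct and takes essentially the same route as the paper: both reduce $\langle\tau_\boldh,\tau_\boldk\rangle_d$ to $\sum_{\boldp\not\in I}\delta_{\boldh}^{(\ell_\boldh)}(\boldp)\,\delta_{\boldk}^{(\ell_\boldk)}(\boldp)/r_d(\boldp)$, use the non-negativity of these terms to drop the absolute values and swap summations, and then bound the $\boldp$-wise factor by $L$ via \eqref{eq:aliasing_summation_L}. Writing the double sum over $\boldh,\boldk\in I$ as a perfect square rather than, as in the paper, a product of two identical sums is a purely cosmetic difference.
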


\begin{proof}
Let $\boldh,\boldk\in I$ be given.
Due to $I=\bigcupdot_{\ell=1}^{L}I_\ell$ 
there exist unique $\ell_\boldk$, $\ell_\boldh$, cf. \eqref{eq:def_lh} and \eqref{eq:def_lh1}, such that $\boldk\in I_{\ell_\boldk}$ and 
$\boldh\in I_{\ell_\boldh}$.
We observe for a single scalar product
\begin{align*}
\langle\tau_\boldh,\tau_\boldk\rangle_d&=
\left\langle
-\sum_{\substack{\boldh'\in\Z^d\setminus\{\boldh\}\\\boldh'\cdot\boldz_{\ell_\boldh}\equiv\boldh\cdot\boldz_{\ell_\boldh}\imod{M_{\ell_\boldh}}}}
\frac{\e^{2\pi\ii\boldh'\cdot\circ}}{r_d(\boldh')}
,
-\sum_{\substack{\boldk'\in\Z^d\setminus\{\boldk\}\\\boldk'\cdot\boldz_{\ell_\boldk}\equiv\boldk\cdot\boldz_{\ell_\boldk}\imod{M_{\ell_\boldk}}}}
\frac{\e^{2\pi\ii\boldk'\cdot\circ}}{r_d(\boldk')}
\right\rangle_d
\\
&=
\sum_{\substack{\boldh'\in\Z^d\setminus\{\boldh\}\\\boldh'\cdot\boldz_{\ell_\boldh}\equiv\boldh\cdot\boldz_{\ell_\boldh}\imod{M_{\ell_\boldh}}}}
\left\langle
\frac{\e^{2\pi\ii\boldh'\cdot\circ}}{r_d(\boldh')}
,
\sum_{\substack{\boldk'\in\Z^d\setminus\{\boldk\}\\\boldk'\cdot\boldz_{\ell_\boldk}\equiv\boldk\cdot\boldz_{\ell_\boldk}\imod{M_{\ell_\boldk}}}}
\frac{\e^{2\pi\ii\boldk'\cdot\circ}}{r_d(\boldk')}
\right\rangle_d.
\intertext{Since we have $\boldh'\in\Z^d\setminus I$ for each $\boldh'$ in the equality above, we apply \eqref{eq:sp_InI} to each of the summands and achieve}
\langle\tau_\boldh,\tau_\boldk\rangle_d&=
\sum_{\substack{\boldh'\in\Z^d\setminus\{\boldh\}\\\boldh'\cdot\boldz_{\ell_\boldh}\equiv\boldh\cdot\boldz_{\ell_\boldh}\imod{M_{\ell_\boldh}}}}
\frac{\delta_{\boldk}^{({\ell_\boldk})}(\boldh')}{r_d(\boldh')}\\
&=
\sum_{\substack{\boldp\in\Z^d}}
\frac{\delta_{\boldh}^{({\ell_\boldh})}(\boldp)\,\delta_{\boldk}^{({\ell_\boldk})}(\boldp)}{r_d(\boldp)}
=
\sum_{\substack{\boldp\not\in I}}
\frac{\delta_{\boldh}^{({\ell_\boldh})}(\boldp)\,\delta_{\boldk}^{({\ell_\boldk})}(\boldp)}{r_d(\boldp)},
\end{align*}
where the last equality holds due to the equality $\delta_\boldh^{(\ell_\boldh)}(\boldp)=0$ for $\boldp\in I$.
Summing up these terms yields
\begin{align*}
\Sigma_{I I}:&=\sum_{\boldh\in I}\sum_{\boldk\in I}|\langle\tau_\boldh,\tau_\boldk\rangle_d|=
\sum_{\boldh\in I}\sum_{\boldk\in I}\langle\tau_\boldh,\tau_\boldk\rangle_d
=\sum_{\boldh\in I}\sum_{\boldk\in I}
\sum_{\substack{\boldp\not\in I}}
\frac{\delta_{\boldh}^{({\ell_\boldh})}(\boldp)\,\delta_{\boldk}^{({\ell_\boldk})}(\boldp)}{r_d(\boldp)}
\\
&=\sum_{\substack{\boldp\not\in I}}
r_d(\boldp)^{-1}
\sum_{\boldh\in I}\delta_{\boldh}^{({\ell_\boldh})}(\boldp)
\sum_{\boldk\in I}\delta_{\boldk}^{({\ell_\boldk})}(\boldp)\\
&
=
\sum_{\substack{\boldp\not\in I}}
r_d(\boldp)^{-1}
\sum_{\ell_1=1}^{L}
\sum_{\boldh\in I_{\ell_1}}\delta_{\boldh}^{({\ell_1})}(\boldp)
\sum_{\ell_2=1}^{L}
\sum_{\boldk\in I_{\ell_2}}\delta_{\boldk}^{({\ell_2})}(\boldp)
\stackrel{\eqref{eq:aliasing_summation_L}}{\le}
L^2\Sigma_{\bcancel{I}\bcancel{I}}.
\end{align*}
\end{proof}
In summary, we achieve the main result of this paper.
\begin{theorem}\label{cor:gen_framework_main_result}
Let $I\subset\Z^d$ and $\Lambda(\boldz_1,M_1,\ldots,\boldz_L,M_L)$ be a rank-1 lattice such that $I=\bigcupdot_{\ell=1}^L I_\ell$, where $I_\ell$ is defined in \eqref{eq:def_Iell}.
Then we have
$$
\sup_{\|f|\mathcal{H}_r(\T^d)\|\le 1}\|f-\operatorname{S}_I^\Lambda f|L_\infty(\T^d)\|\le(L+1)\sqrt{\Sigma_{\bcancel{I}\bcancel{I}}}.$$
\end{theorem}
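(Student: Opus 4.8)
The plan is to assemble the squared worst-case error bound \eqref{eq:split_wceinfty} with the two estimates just established in Lemmas~\ref{lem:est_sum_cI_I} and~\ref{lem:est_sum_I_I}. Essentially all of the analytic work has already been carried out: the derivation leading to \eqref{eq:split_wceinfty} used the Cauchy--Schwarz inequality and the triangle inequality to reduce the squared worst-case $L_\infty(\T^d)$ error to the three nonnegative quantities $\Sigma_{\bcancel{I}\bcancel{I}}$, $\Sigma_{\bcancel{I}I}$, and $\Sigma_{II}$, and the two lemmas bound the mixed term and the pure interior term against the truncation term. So the theorem is a direct assembly rather than a fresh argument.

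Concretely, I would start from the right-hand side of \eqref{eq:split_wceinfty},
\begin{align*}
\sup_{\|f|\mathcal{H}_r(\T^d)\|\le 1}\|f-\operatorname{S}_I^\Lambda f|L_\infty(\T^d)\|^2
\le\Sigma_{\bcancel{I}\bcancel{I}}+2\Sigma_{\bcancel{I}I}+\Sigma_{II},
\end{align*}
insert $\Sigma_{\bcancel{I}I}\le L\,\Sigma_{\bcancel{I}\bcancel{I}}$ from Lemma~\ref{lem:est_sum_cI_I} and $\Sigma_{II}\le L^2\,\Sigma_{\bcancel{I}\bcancel{I}}$ from Lemma~\ref{lem:est_sum_I_I}, and factor out the common $\Sigma_{\bcancel{I}\bcancel{I}}$ to obtain
\begin{align*}
\sup_{\|f|\mathcal{H}_r(\T^d)\|\le 1}\|f-\operatorname{S}_I^\Lambda f|L_\infty(\T^d)\|^2
\le(1+2L+L^2)\,\Sigma_{\bcancel{I}\bcancel{I}}.
\end{align*}

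The single observation that finishes the argument is that $1+2L+L^2=(L+1)^2$ is a perfect square, so taking square roots on both sides yields exactly the claimed bound $(L+1)\sqrt{\Sigma_{\bcancel{I}\bcancel{I}}}$. I expect no genuine obstacle at this stage, since the delicate combinatorial bookkeeping with the Kronecker delta functions was already dispatched in Lemma~\ref{lem:properties_of_delta} and the two estimation lemmas; the only thing worth noting is how cleanly the prefactors $2L$ and $L^2$ line up so that the right-hand side collapses to a square instead of leaving an opaque $\sqrt{1+2L+L^2}$. If one later wanted a sharper constant, the natural places to revisit would be the triangle-inequality split in \eqref{eq:split_wceinfty} or the worst-case appeal to \eqref{eq:aliasing_summation_L} inside the lemmas, but for the statement as given the direct combination is enough.
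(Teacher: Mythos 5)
Your proposal is correct and matches the paper's own proof exactly: the paper also proves Theorem~\ref{cor:gen_framework_main_result} by combining \eqref{eq:split_wceinfty} with Lemmas~\ref{lem:est_sum_cI_I} and~\ref{lem:est_sum_I_I} and recognizing $1+2L+L^2=(L+1)^2$. Your write-up simply makes explicit the one-line assembly that the paper leaves terse.
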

\begin{proof}
Lemmas \ref{lem:est_sum_cI_I} and \ref{lem:est_sum_I_I} together with \eqref{eq:split_wceinfty} yield the assertion.
\end{proof}
Roughly speaking, the worst case sampling error $\sup_{\|f|\mathcal{H}_r(\T^d)\|\le 1}\|f-\operatorname{S}_I^\Lambda f|L_\infty(\T^d)\|$ for the considered sampling method is bounded by a product of the worst case truncation error $\Sigma_{\bcancel{I}\bcancel{I}}$, cf. \eqref{eq:trunc_error}, and the
number $L$ of rank-1 lattices that needs to be joined in order to observe $I=\bigcupdot I_\ell$.

\begin{remark}\label{rem:averaging_approx_fc}
In \cite{KaVo19}, a slight modification of Algorithm~\ref{alg:compute_S_I_Lambda} is presented as Algorithm~2 and the authors estimate the asymptotic sampling rates for more specific approximation settings. For practical applications, this
algorithm seems preferable since the approximated Fourier coefficients may be computed as an average from several rank-1 lattices, which may prove beneficial in real world applications due to the averaging of the aliasing error. However,
we decided to consider the simpler Algorithm~\ref{alg:compute_S_I_Lambda} in order to avoid unnecessarily complicated calculations that are caused by the averaging process. Nevertheless, the proof strategy presented here succeeds even for
the slightly more complicated algorithm but suffers from additional technical efforts.
\end{remark}

\section{Applications}

\subsection{Tractability}\label{sec:tractability}

The last section clarifies the general framework. In this section, we will
use the results of the general framework in order to treat a specific approximation problem.

\begin{sloppypar}
Similar to the considerations in \cite{KuWaWo09}, we define the reproducing kernel $K_d$ for the weighted
Korobov space $\mathcal{H}_{\alpha,\boldgamma_d}(\T^d)$ with smoothness parameter $\alpha>1$ as
$$
K_d(\boldx,\boldy)=\sum_{\boldh\in\Z^d}\frac{\e^{2\pi\ii\boldh\cdot(\boldx-\boldy)}}{r_d(\alpha,\boldgamma_d,\boldh)},
$$
where for each $d$ the vector $\boldgamma_d=(\gamma_{d,1},\ldots,\gamma_{d,d})$ of positive weights satisfies
\begin{align*}
1\ge\gamma_{d,1}\ge\ldots\ge\gamma_{d,d}>0,
\end{align*}
and the weight function $r_d$ is defined as
$$
r_d(\alpha,\boldgamma_d,\boldh)=\prod_{j=1}^dr(\alpha,\gamma_{d,j},h_j)
$$
with
$
r(\alpha,\gamma_{d,j},h_j)=\max\left(1,\gamma_{d,j}^{-1}\,|h_j|^\alpha\right).
$
Reasonable frequency sets $I$ are constructed by collecting all frequencies $\boldh\in\Z^d$ where the
weight function $r_d(\alpha,\boldgamma_d,\boldh)$ is small, i.e., where the reciprocal $r_d(\alpha,\boldgamma_d,\boldh)^{-1}$
is large, which brings smallest possible worst case truncation errors $\Sigma_{\bcancel{I}\bcancel{I}}$ with respect to the number $|I|$ of excluded frequencies.
We define
\begin{equation}
A_d(N):=\left\{\boldh\in\Z^d\;\colon\; r_d(\alpha,\boldgamma_d,\boldh)\le N\right\},\label{eq:def_AdN}
\end{equation}
which implies 
$
\Sigma_{\bcancel{A_d(N)}\bcancel{A_d(N)}}=\min_{|I|\le|A_d(N)|}\Sigma_{\bcancel{I}\bcancel{I}},
$
i.e., the worst case truncation error $\Sigma_{\bcancel{A_d(N)}\bcancel{A_d(N)}}$ is as small as possible for approximations that
use trigonometric polynomials that are supported by at most $|A_d(N)|$ frequencies.
\end{sloppypar}
We collect some basic facts about the frequency sets $A_d(N)$ from \cite{KuSlWo08}.

\begin{lemma}
For $N\ge 1$, the cardinality of the set $A_d(N)$ is bounded by
\begin{align}
(\gamma_{d,1}N)^{1/\alpha}\le|A_d(N)|\le N^q\prod_{j=1}^d\left(1+2\zeta(\alpha q)\gamma_{d,j}^q\right)\qquad\forall q>\frac{1}{\alpha}.
\label{eq:card_AdN}
\end{align}
Moreover, we observe the set inclusions
\begin{align}
A_d(N)\subset\left[-\floor{(\gamma_{d,1}N)^{1/\alpha}},\floor{(\gamma_{d,1}N)^{1/\alpha}}\right]^d\subset\left[-\frac{|A_d(N)|}{2},\frac{|A_d(N)|}{2}\right]^d
\label{eq:embedding_AdN}
\end{align}
and the upper bound on the worst case truncation error
\begin{align}
\Sigma_{\bcancel{A_d(N)}\bcancel{A_d(N)}}:=\sum_{\boldh\in\Z^d\setminus A_d(N)}r_d(\alpha,\boldgamma_d,\boldh)^{-1}\le\frac{1}{|A_d(N)|^{1/\tau-1}}\frac{\tau}{1-\tau}\prod_{j=1}^d\left(1+2\zeta(\alpha\tau)\gamma_{d,j}^\tau\right)^{1/\tau}
\label{eq:ub_wcterr}
\end{align}
for all $\tau\in(1/\alpha,1)$.

\end{lemma}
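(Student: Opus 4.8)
The plan is to reduce all three assertions to the product structure of $r_d$ and a single one-dimensional summation estimate. Since $r_d(\alpha,\boldgamma_d,\boldh)=\prod_{j=1}^d r(\alpha,\gamma_{d,j},h_j)$, any sum over $\Z^d$ of a power of $r_d^{-1}$ factorizes into a product of sums over $\Z$, and the engine is the bound
\[
\sum_{h\in\Z}r(\alpha,\gamma,h)^{-s}=1+2\sum_{h=1}^\infty\min\left(1,\gamma\,|h|^{-\alpha}\right)^s\le 1+2\gamma^s\sum_{h=1}^\infty h^{-\alpha s}=1+2\zeta(\alpha s)\,\gamma^s,
\]
valid for every $s>1/\alpha$ (so that $\zeta(\alpha s)<\infty$); here I use $r(\alpha,\gamma,0)=1$ and $r(\alpha,\gamma,h)^{-1}=\min(1,\gamma|h|^{-\alpha})\le\gamma|h|^{-\alpha}$ for $h\neq0$. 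Multiplying over $j$ gives $\sum_{\boldh\in\Z^d}r_d(\alpha,\boldgamma_d,\boldh)^{-s}\le\prod_{j=1}^d(1+2\zeta(\alpha s)\gamma_{d,j}^s)$.

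For \eqref{eq:card_AdN} I would handle the two sides separately. The lower bound comes from counting frequencies supported on the first coordinate only: as $r(\alpha,\gamma_{d,j},0)=1$, a vector $(h_1,0,\dots,0)$ lies in $A_d(N)$ precisely when $|h_1|\le(\gamma_{d,1}N)^{1/\alpha}$, so $|A_d(N)|\ge 2\floor{(\gamma_{d,1}N)^{1/\alpha}}+1\ge(\gamma_{d,1}N)^{1/\alpha}$. The upper bound uses the Markov-type observation that $N/r_d(\alpha,\boldgamma_d,\boldh)\ge1$ on $A_d(N)$, whence for any $q>1/\alpha$
\[
|A_d(N)|\le\sum_{\boldh\in A_d(N)}\left(\frac{N}{r_d(\alpha,\boldgamma_d,\boldh)}\right)^q\le N^q\sum_{\boldh\in\Z^d}r_d(\alpha,\boldgamma_d,\boldh)^{-q}\le N^q\prod_{j=1}^d(1+2\zeta(\alpha q)\gamma_{d,j}^q).
\]
The inclusions \eqref{eq:embedding_AdN} then follow quickly: every factor satisfies $r(\alpha,\gamma_{d,j},h_j)\ge1$, so for $\boldh\in A_d(N)$ each single factor is dominated by the full product, giving $\max(1,\gamma_{d,k}^{-1}|h_k|^\alpha)\le N$ and hence $|h_k|\le(\gamma_{d,k}N)^{1/\alpha}\le(\gamma_{d,1}N)^{1/\alpha}$ by monotonicity of the weights; since $h_k\in\Z$ the bound may be replaced by its floor, which is the first inclusion, while the second is exactly the lower bound just proved, rearranged as $\floor{(\gamma_{d,1}N)^{1/\alpha}}\le|A_d(N)|/2$.

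The hard part will be \eqref{eq:ub_wcterr}, where the difficulty is the precise constant $\tfrac{\tau}{1-\tau}$ together with the exponent $1/\tau$ on the product. A crude Hölder splitting $r_d^{-1}=r_d^{-\tau}r_d^{-(1-\tau)}\le N^{-(1-\tau)}r_d^{-\tau}$ on $\Z^d\setminus A_d(N)$, combined with the factorized sum and \eqref{eq:card_AdN}, already yields $\Sigma_{\bcancel{A_d(N)}\bcancel{A_d(N)}}\le P^{1/\tau}|A_d(N)|^{-(1/\tau-1)}$ with $P:=\prod_{j=1}^d(1+2\zeta(\alpha\tau)\gamma_{d,j}^\tau)$, but this misses the factor $\tfrac{\tau}{1-\tau}$, which is genuinely smaller than $1$ for $\tau<1/2$ (possible once $\alpha>2$). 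To obtain the stated sharper bound I would use the layer-cake identity $r_d(\boldh)^{-1}=\int_{r_d(\boldh)}^\infty t^{-2}\,\mathrm{d}t$ and interchange sum and integral (all terms nonnegative) to get
\[
\Sigma_{\bcancel{A_d(N)}\bcancel{A_d(N)}}=\int_N^\infty t^{-2}\left(|A_d(t)|-|A_d(N)|\right)\mathrm{d}t.
\]
Inserting $|A_d(t)|\le P\,t^\tau$ from \eqref{eq:card_AdN} and integrating the two pieces gives $\Sigma_{\bcancel{A_d(N)}\bcancel{A_d(N)}}\le\tfrac{1}{N}\left(\tfrac{P N^\tau}{1-\tau}-|A_d(N)|\right)$. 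Setting $x:=P N^\tau/|A_d(N)|\ge1$ (again by \eqref{eq:card_AdN}) and using $P^{1/\tau}=(PN^\tau)^{1/\tau}/N$, the claim reduces to the elementary inequality $\tau x^{1/\tau}\ge x+\tau-1$ for $x\ge1$, which is just the statement that the graph of the convex function $x\mapsto\tau x^{1/\tau}$ lies above its tangent line at $x=1$. The convexity step and the bookkeeping that trades $N$ for $|A_d(N)|$ are where the real work sits; everything else is the one-dimensional estimate recycled.
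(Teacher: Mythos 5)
Your proof is correct, but it is genuinely more self-contained than the paper's: for \eqref{eq:card_AdN} and \eqref{eq:ub_wcterr} the paper simply cites Lemmas 5 and 6 of Kuo, Sloan and Wo\'zniakowski \cite{KuSlWo08} and only proves the inclusions \eqref{eq:embedding_AdN} directly -- and there your argument (each factor $r(\alpha,\gamma_{d,j},h_j)\ge 1$ forces $|h_k|\le(\gamma_{d,1}N)^{1/\alpha}$, and the second inclusion is the lower cardinality bound rearranged via $\floor{(\gamma_{d,1}N)^{1/\alpha}}\le|A_d(N)|/2$) coincides with the paper's. What you add on top is a full reconstruction of the cited results: the Markov-type counting bound $|A_d(N)|\le N^q\sum_{\boldh}r_d(\alpha,\boldgamma_d,\boldh)^{-q}$ combined with the factorized one-dimensional estimate $\sum_{h\in\Z}r(\alpha,\gamma,h)^{-s}\le 1+2\zeta(\alpha s)\gamma^s$, and, for the truncation error, the layer-cake identity
\begin{align*}
\Sigma_{\bcancel{A_d(N)}\bcancel{A_d(N)}}=\int_N^\infty t^{-2}\left(|A_d(t)|-|A_d(N)|\right)\mathrm{d}t,
\end{align*}
which after inserting $|A_d(t)|\le P\,t^\tau$ reduces the sharp constant to the tangent-line inequality $\tau x^{1/\tau}\ge x+\tau-1$ for the convex function $x\mapsto\tau x^{1/\tau}$ (valid for all $x>0$, so your restriction $x\ge1$ is harmless). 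I checked the bookkeeping: the integral converges precisely because $\tau<1$, the lower bound $|A_d(N)|\ge 2\floor{(\gamma_{d,1}N)^{1/\alpha}}+1\ge(\gamma_{d,1}N)^{1/\alpha}$ uses the elementary $2\floor{x}+1\ge x$, and the substitution $x=PN^\tau/|A_d(N)|$ with $P^{1/\tau}=(PN^\tau)^{1/\tau}/N$ does trade $N$ for $|A_d(N)|$ exactly as claimed. The trade-off is the usual one: the paper buys brevity by outsourcing, while your version makes the lemma independent of the average-case-setting reference and makes transparent where the factor $\tau/(1-\tau)$ and the exponent $1/\tau$ on the product actually come from.
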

\begin{proof}
The proof of the statements \eqref{eq:card_AdN} and \eqref{eq:ub_wcterr} can be found in \cite[Lem. 5 \& 6]{KuSlWo08}.
The set inclusions \eqref{eq:embedding_AdN} can be seen by determining 
$$
r_d(\alpha,\boldgamma_d,\boldh)>N\text{ for }\boldh\in\Z^d\setminus
\left[-\floor{(\gamma_{d,1}N)^{1/\alpha}},\floor{(\gamma_{d,1}N)^{1/\alpha}}\right]^d
$$
and
$$
\floor{(\gamma_{d,1}N)^{1/\alpha}}\le \frac{1+2\floor{(\gamma_{d,1}N)^{1/\alpha}}}{2}=\frac{|A_{d,1}(N)|}{2}\le\frac{|A_{d}(N)|}{2},
$$
where $A_{d,1}(N)$ is the projection of $A_{d}(N)$ to its first dimension.
\end{proof}

In the following, we will apply the results from Section \ref{sec:general_framework} to the specific approximation problem.
To this end, we need to determine multiple rank-1 lattices that fulfill $I=\bigcupdot_{\ell=1}^L I_\ell$, $I_\ell$ as stated in \eqref{eq:def_Iell}. Furthermore, we need upper bounds on the number $L$ of used rank-1 lattices as well as upper bounds
on the number of used sampling values. The next lemma addresses the necessary estimates in full detail.

\begin{lemma}\label{lem:estimate_sampling_set}
Let $N\ge1$ and $A_d(N)$ as stated in \eqref{eq:def_AdN}.
Then there exists a multiple rank-1 lattice
$\Lambda(\boldz_1,M_1,\ldots,\boldz_L,M_L)$ with $L\le\max(3\,\ln |A_d(N)|,1)$ and $M_1=\ldots=M_L\le3\,|A_d(N)|$
that fulfills $I=\bigcupdot_{\ell=1}^dI_\ell$, $I_\ell$
as stated in \eqref{eq:def_Iell}. In particular, the cardinality of $\Lambda(\boldz_1,M_1,\ldots,\boldz_L,M_L)$
is bounded by
\begin{equation*}
2|A_d(N)|<M:=|\Lambda(\boldz_1,M_1,\ldots,\boldz_L,M_L)|< 9\,|A_d(N)|\,\max(\ln |A_d(N)|,1).
\end{equation*}
\end{lemma}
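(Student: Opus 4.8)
The plan is to construct the required multiple rank-1 lattice by a probabilistic/greedy argument that successively peels off frequencies from $A_d(N)$. The key quantitative input is the embedding \eqref{eq:embedding_AdN}, which tells us that $A_d(N)$ sits inside a cube of side at most $|A_d(N)|$, so the coordinate differences of any two frequencies $\boldk-\boldh$ have entries bounded in absolute value by $|A_d(N)|$. This is what controls how large a prime modulus $M$ must be so that a \emph{single} rank-1 lattice $\Lambda(\boldz,M)$ separates a given frequency from all the others, i.e. so that $\boldk$ lands in the set $I_\ell$ defined in \eqref{eq:def_Iell}. First I would fix a common prime modulus $M$ with $2|A_d(N)| < M < 3|A_d(N)|$ (Bertrand's postulate guarantees such a prime exists once $|A_d(N)|\ge 1$), which immediately yields the stated lower bound $2|A_d(N)|<|\Lambda|$ and the per-lattice bound $M_\ell = M \le 3|A_d(N)|$.

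The heart of the matter is bounding $L$. I would argue as in \cite{Kae17}: draw the generating vectors $\boldz_\ell$ randomly (uniformly from a suitable box modulo $M$) and estimate, for a fixed not-yet-reconstructed frequency $\boldk\in A_d(N)$, the probability that $\boldk$ fails to be separated by $\Lambda(\boldz_\ell,M)$ — that is, the probability that there exists $\boldh\in A_d(N)\setminus\{\boldk\}$ with $(\boldk-\boldh)\cdot\boldz_\ell\equiv 0\imod{M}$. Since $M>2|A_d(N)|$ exceeds every coordinate difference, for each fixed $\boldk\neq\boldh$ the collision probability over random $\boldz_\ell$ is at most $1/M$; a union bound over the at most $|A_d(N)|-1$ competing frequencies gives a failure probability below $|A_d(N)|/M < 1/2$ for each single $\boldk$. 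Thus in expectation at least half of the still-unreconstructed frequencies are captured by each new lattice. Iterating, the number of remaining frequencies after $\ell$ lattices is, in expectation, at most $|A_d(N)|\,2^{-\ell}$, so after $L\approx\log_2|A_d(N)|$ rounds fewer than one frequency remains on average; converting the base-$2$ logarithm and inserting the necessary constants yields $L\le\max(3\ln|A_d(N)|,1)$. One must phrase this as an \emph{existence} statement: because the expected number of rounds needed is bounded by that quantity, there exists at least one realization achieving $I=\bigcupdot_{\ell=1}^L I_\ell$ within that many lattices.

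I expect the main obstacle to be making the probabilistic decay rigorous while respecting the \emph{sequential, disjoint} definition of the $I_\ell$ in \eqref{eq:def_Iell}: a frequency already assigned to some $I_j$ with $j<\ell$ is excluded from later sets, yet the separation condition in \eqref{eq:def_Iell} still quantifies over \emph{all} $\boldk\in I\setminus\{\boldh\}$, not merely the unassigned ones. I would handle this by noting that membership $\boldk\in I_\ell$ depends only on whether $\boldk$ has a collision partner anywhere in $I$ under $\Lambda(\boldz_\ell,M)$, so the greedy peeling and the probabilistic estimate above apply verbatim to the set of frequencies not yet placed in $\bigcupdot_{j<\ell}I_j$. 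Finally, combining $L\le\max(3\ln|A_d(N)|,1)$ with $M_\ell\le 3|A_d(N)|$ and $|\Lambda|\le\sum_{\ell=1}^L M_\ell = LM$ gives the upper bound $|\Lambda| < 9|A_d(N)|\max(\ln|A_d(N)|,1)$, completing the two-sided cardinality estimate. The detailed constants and the precise probabilistic construction are exactly those worked out in \cite[Algorithms~1~\&~3 and the accompanying analysis]{Kae17}, which I would cite for the technical probability bounds rather than reproduce in full.
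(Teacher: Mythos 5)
Your construction is essentially the paper's: a single prime modulus $M$ with $2|A_d(N)|<M<3|A_d(N)|$ shared by all lattices, random generating vectors, a per-frequency collision probability below $1/2$ via a union bound over $I$ (using \eqref{eq:embedding_AdN} to guarantee distinct residues modulo $M$), geometric decay of the uncovered frequencies, and existence by the first-moment method. The paper merely outsources this probabilistic core to \cite[Theorems~3.2 \&~3.4]{Kae17}, applied with $c=2$ and $\delta=\sqrt{e/|A_d(N)|}$, which yields $L=\ceil{2(\ln|I|-\ln\delta)}\le 3\ln|I|$; your inline halving argument recovers the same bound with room to spare, and your handling of the sequential disjointness (test separation against all of $I$, then set $I_\ell=I_\ell'\setminus\bigcup_{j<\ell}I_j'$) is exactly what the paper does.

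There are, however, two concrete slips. First, Bertrand's postulate only guarantees a prime in $(n,2n)$, hence a prime in $(2m,4m)$ --- not in $(2m,3m)$ as you claim; the interval $[2n,3n]$ requires the stronger theorem of El Bachraoui, which is precisely what the paper cites (\cite[Thm.~1.3]{Ba06}) and what you need for $M_\ell\le 3|A_d(N)|$. Second, your argument breaks down in the degenerate case $|A_d(N)|=1$: there is no prime in the open interval $(2,3)$, and $\ln|A_d(N)|=0$ empties the logarithmic bookkeeping. The paper treats this case separately, observing $A_d(N)=\{\boldzero\}$ and taking $\Lambda(\boldone,3)$ with $L=1$ and $M_1=3\le 3|A_d(N)|$ (it also notes that $|A_d(N)|=2$ cannot occur, by the symmetry $\boldh\mapsto-\boldh$ of $r_d$, although your constants happen to survive $|I|=2$ anyway). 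Finally, a phrasing point: existence within $L$ lattices should be concluded from $\mathbb{P}(X_L\ge 1)\le\mathbb{E}[X_L]<1$, where $X_L$ is the number of uncovered frequencies after $L$ draws, not from a bound on the \emph{expected number of rounds}; your earlier sentence contains the correct first-moment statement, but the final justification as written is not the valid inference.
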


\begin{proof}
We distinguish two cases.
First, we assume $|A_d(N)|=1$. Then, we observe $A_d(N)=\{\boldzero\}$ and the rank-1 lattice $\Lambda(\boldone,3)$ is a multiple rank-1 lattice for which $I_1= A_d(N)$, $L\le 1=\max(3\,\ln |A_d(N)|,1)$, and $M_1=3\le 3|A_d(N)|$ hold.
We estimate
$$2=2|A_d(N)|<3<9\,|A_d(N)|\,\max(\ln |A_d(N)|,1)=9.$$
For the second case, we observe  that $|A_d(N)|\ge 2$ results in $|A_d(N)|\ge 3$ since $\boldzero\in A_d(N)$ and $\boldzero\neq\boldh\in A_d(N)$ implies $-\boldh\in A_d(N)$, $-\boldh\neq\boldh$, due to the symmetry of the weight function $r_d$.
Accordingly, we assume $|A_d(N)|\ge 3$ and apply \cite[Theorems 3.2 \& 3.4]{Kae17} to $I:=A_d(N)$ with $c:=2$ and $1>\delta:=\sqrt{\frac{e}{|A_d(N)|}}>0$.

Thus, we determine
$L=\ceil{2(\ln |I|-\ln \delta)}\le3\,\ln |I|$ and lattice sizes
$M_\ell$ that are prime numbers larger than $2(|I|-1)$ fulfilling the
additional condition
$$M_\ell\in\{M\in\N\colon M\text{ prime with }|\{\boldk\bmod M\colon\boldk\in I\}|=|I|\}.$$
This condition is automatically fulfilled for $M_\ell\ge|I|+1$, $I=A_d(N)$ due to \eqref{eq:embedding_AdN}.
\newline
Due to \cite[Thm. 1.3]{Ba06} there exists at least one prime number $P$ in the interval
$[2\,|I|,3\,|I|]$. We fix this prime number as lattice sizes $M_\ell=P$, $\ell=1,\ldots,L$,
i.e., $M_\ell<3\,|I|$ for each  $\ell=1,\ldots,L$.
\newline
Subsequently, we choose the generating vectors $\boldz_1,\ldots,\boldz_L\in[0,P-1]^d$ uniformly at random.
Then with probability at least $1-\delta>0$ we observe the equality
$\bigcup_{l=1}^L I_\ell'=I$ with
$$
I_\ell':=\left\{\boldk\in I\colon \boldk\cdot\boldz_\ell\not\equiv\boldh\cdot\boldz_\ell\imod{M_\ell}\text{ for all }\boldh\in I\setminus\{\boldk\}\right\}
$$
The simple calculation $I_\ell=I_\ell'\setminus\bigcup_{j=1}^{\ell-1}I_j'$, $\ell=1,\ldots,L$, yields
the disjoint partition $\bigcupdot_{\ell=1}^LI_\ell=I$.
\newline
Since the probability for choosing suitable generating vectors is larger than zero, there exists 
at least one multiple rank-1 lattice with $L\le3\,\ln |I|$ and $M_\ell<3|I|$, $\ell=1,\ldots,L$, that fulfills $\bigcupdot_{\ell=1}^LI_\ell=I$. Accordingly, we estimate 
\begin{align*}
2|A_d(N)|<M_1\le|\Lambda(\boldz_1,M_1,\ldots,\boldz_L,M_L)|<9\,|A_d(N)|\,\max(\ln|A_d(N)|,1)
\end{align*}
for $|A_d(N)|\ge 3$, i.e., for $A_d(N)\neq\{\boldzero\}$.
\end{proof}

\begin{remark}
In the proof of Lemma \ref{lem:estimate_sampling_set}, the failure probability
$\delta=\sqrt{\frac{e}{|A_d(N)|}}$  decreases for increasing cardinalities of the frequency set $A_d(N)$,
i.e., increasing number $N$. For instance, $|A_d(N)|>272$ implies that the failure probability $\delta$ is bounded from above by $1/10$,
i.e., the construction will be successful with a probability of at least $9/10$.
Moreover, the sets $I_\ell$, cf. \eqref{eq:def_Iell}, can be determined in a fast and
simple way, cf. \cite{Kae17} for more details, which allows to check whether the condition
$I=\bigcupdot_{\ell=1}^LI_\ell$ holds. In cases where $I\neq\bigcupdot_{\ell=1}^LI_\ell$, 
one repeats the random choice of the generating vectors and checks the property $I=\bigcupdot_{\ell=1}^LI_\ell$ several times. The corresponding probability that
none of the tested multiple rank-1 lattices ensures $I=\bigcupdot_{\ell=1}^LI_\ell$
decreases exponentially with the number of repetitions. Thus, in practice this strategy inevitably leads to a multiple rank-1 lattice that has the requested property.
Accordingly, we described a practically applicable construction of the sampling sets $\Lambda(\boldz_1,M_1,\ldots,\boldz_L,M_L)$. 
\end{remark}

Now we bring together the relationship of the worst case truncation error, the cardinalities of the frequency sets $A_d(N)$, the estimates of the number $L$ of used rank-1 lattices, and the cardinality of the used sampling set.

\begin{theorem}\label{thm:specific_err_estimate}
The worst case $L_\infty(\T^d)$ sampling error for functions from the Korobov space $\mathcal{H}_{\alpha,\boldgamma_d}(\T^d)$, dimension $d \ge 1$, with smoothness parameter $\alpha>1$ and weights $\boldgamma_d$ is bounded from above by
\begin{align}
\sup_{\|f|\mathcal{H}_{\alpha,\boldgamma_d}(\T^d)\|\le 1}&\|f-\operatorname{S}_{A_d(N)}^\Lambda f|L_\infty(\T^d)\|
\nonumber\\
&< 4\,3^{1/\tau-1}(\ln M)^{\frac{1+\tau}{2\tau}}M^{\frac{\tau-1}{2\tau}}\sqrt{\frac{\tau}{1-\tau}}\prod_{j=1}^d\left(1+2\zeta(\alpha\tau)\gamma_{d,j}^\tau\right)^{\frac{1}{2\tau}}\nonumber\\
&\le 4\,3^{1/\tau-1}\delta^{-\frac{1+\tau}{2\tau}}M^{\frac{1+\delta}{2}-\frac{1-\delta}{2\tau}}
\sqrt{\frac{\tau}{1-\tau}}\prod_{j=1}^d\left(1+2\zeta(\alpha\tau)\gamma_{d,j}^\tau\right)^{\frac{1}{2\tau}},
\label{eq:err_estimate_arbitrary_delta_tau}
\end{align}
when using the multiple rank-1 lattices established in Lemma \ref{lem:estimate_sampling_set}.
The estimates in \eqref{eq:err_estimate_arbitrary_delta_tau} hold for all parameters $\tau$ and $\delta$ in their ranges $\tau\in(1/\alpha,1)$ and $\delta\in(0,1)$
and the number $M:=|\Lambda(\boldz_1,M_1,\ldots,\boldz_L,M_L)|$ is the total number of used sampling values.
\end{theorem}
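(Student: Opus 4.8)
The plan is to chain together the three ingredients already at hand: the general framework bound of Theorem~\ref{cor:gen_framework_main_result}, the truncation error estimate \eqref{eq:ub_wcterr}, and the structural bounds on $L$ and $M$ from Lemma~\ref{lem:estimate_sampling_set}. Applying Theorem~\ref{cor:gen_framework_main_result} to $I=A_d(N)$ with the multiple rank-1 lattice supplied by Lemma~\ref{lem:estimate_sampling_set} immediately gives
\[
\sup_{\|f|\mathcal{H}_{\alpha,\boldgamma_d}(\T^d)\|\le 1}\|f-\operatorname{S}_{A_d(N)}^\Lambda f|L_\infty(\T^d)\|\le(L+1)\sqrt{\Sigma_{\bcancel{A_d(N)}\bcancel{A_d(N)}}}.
\]
Inserting the square root of \eqref{eq:ub_wcterr} then reduces the whole statement to controlling the prefactor $(L+1)\,|A_d(N)|^{(\tau-1)/(2\tau)}$, since the factors $\sqrt{\tau/(1-\tau)}$ and $\prod_{j=1}^d(1+2\zeta(\alpha\tau)\gamma_{d,j}^\tau)^{1/(2\tau)}$ already appear verbatim in the claimed bound.

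The heart of the argument is to trade the cardinality $|A_d(N)|$ for the number $M$ of samples. Writing $n:=|A_d(N)|$ and treating the generic case $n\ge 3$ first, Lemma~\ref{lem:estimate_sampling_set} gives $M<9\,n\ln n$ and $M>2n$, hence $\ln n<\ln M$. Because the exponent $(\tau-1)/(2\tau)$ is negative, $M<9\,n\ln n$ yields $n^{(\tau-1)/(2\tau)}<(9\ln n)^{(1-\tau)/(2\tau)}M^{(\tau-1)/(2\tau)}$, and the two elementary identities $9^{(1-\tau)/(2\tau)}=3^{1/\tau-1}$ and $1+\frac{1-\tau}{2\tau}=\frac{1+\tau}{2\tau}$ are exactly what produce the target constants. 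Combining this with $L\le 3\ln n$ and the crude estimate $3\ln n+1<4\ln n$ (valid for $n\ge 3$) collapses the prefactor to $4\cdot3^{1/\tau-1}(\ln n)^{(1+\tau)/(2\tau)}M^{(\tau-1)/(2\tau)}$, and replacing $\ln n$ by the larger $\ln M$ gives the first inequality in \eqref{eq:err_estimate_arbitrary_delta_tau}. The degenerate case $n=1$, where $A_d(N)=\{\boldzero\}$, $L=1$ and $M=3$, must be checked by hand; there the claimed bound reduces to the numerical inequality $2\le 4\cdot3^{(1-\tau)/(2\tau)}(\ln 3)^{(1+\tau)/(2\tau)}$, which is clear since both power factors exceed one.

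For the second inequality I would simply dispose of the logarithm using the elementary bound $\delta\ln M=\ln M^\delta\le M^\delta-1<M^\delta$, i.e.\ $\ln M<\delta^{-1}M^\delta$, valid for every $\delta\in(0,1)$. Raising this to the power $(1+\tau)/(2\tau)$ replaces $(\ln M)^{(1+\tau)/(2\tau)}$ by $\delta^{-(1+\tau)/(2\tau)}M^{\delta(1+\tau)/(2\tau)}$, and a short exponent computation shows $\frac{\tau-1}{2\tau}+\frac{\delta(1+\tau)}{2\tau}=\frac{1+\delta}{2}-\frac{1-\delta}{2\tau}$, which is precisely the exponent of $M$ in the second line of \eqref{eq:err_estimate_arbitrary_delta_tau}. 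The only real obstacle is the careful bookkeeping in the second paragraph: one must track how the constant $9$, the factor $L+1$, and the two different powers of $\ln$ recombine, and it is here that the identities $9^{(1-\tau)/(2\tau)}=3^{1/\tau-1}$ and $1+\frac{1-\tau}{2\tau}=\frac{1+\tau}{2\tau}$ do all the work; everything else is routine.
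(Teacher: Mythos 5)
Your proposal is correct and follows essentially the same route as the paper: chaining Theorem~\ref{cor:gen_framework_main_result} with \eqref{eq:ub_wcterr} and the bounds of Lemma~\ref{lem:estimate_sampling_set}, then removing the logarithm via $\ln x\le x^\delta/\delta$, with the same exponent arithmetic $9^{(1-\tau)/(2\tau)}=3^{1/\tau-1}$ and $\frac{\tau-1}{2\tau}+\frac{\delta(1+\tau)}{2\tau}=\frac{1+\delta}{2}-\frac{1-\delta}{2\tau}$. The only cosmetic difference is that the paper works with the squared error and absorbs all cases uniformly via $L+1\le 4\max(\ln|A_d(N)|,1)$ and $\max(\ln|A_d(N)|,1)<\ln M$, whereas you split the cases $|A_d(N)|\ge 3$ and $|A_d(N)|=1$ by hand (the case $|A_d(N)|=2$ being impossible by the symmetry of $r_d$, as noted in the proof of Lemma~\ref{lem:estimate_sampling_set}).
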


\begin{proof}
Due to Theorem \ref{cor:gen_framework_main_result} and \eqref{eq:ub_wcterr} we observe
\begin{align*}
\sup_{\|f|\mathcal{H}_{\alpha,\boldgamma_d}(\T^d)\|\le 1}&\|f-\operatorname{S}_{A_d(N)}^\Lambda f|L_\infty(\T^d)\|^2\le (L+1)^2\Sigma_{\bcancel{I}\bcancel{I}}\\
&\le (L+1)^2
\frac{1}{|A_d(N)|^{1/\tau-1}}\frac{\tau}{1-\tau}\prod_{j=1}^d\left(1+2\zeta(\alpha\tau)\gamma_{d,j}^\tau\right)^{1/\tau}.
\end{align*}
For computing approximations, we use the multiple rank-1 lattices from Lemma \ref{lem:estimate_sampling_set} and conclude
\begin{align*}
3&\le M,\\
\max(\ln |A_d(N)|,1)&<\ln M,\\
\frac{1}{|A_d(N)|}&<\frac{9\max(\ln |A_d(N)|,1)}{M}<\frac{9\,\ln M}{M},\\
L+1&\le 4\max(\ln|A_d(N)|,1),
\end{align*}
which yields
\begin{align*}
\sup_{\|f|\mathcal{H}_{\alpha,\boldgamma_d}(\T^d)\|\le 1}&\|f-\operatorname{S}_{A_d(N)}^\Lambda f|L_\infty(\T^d)\|^2\\[-1.5em]
&\le 16\max(\ln |A_d(N)|,1)^2
\frac{1}{|A_d(N)|^{1/\tau-1}}\overbrace{\frac{\tau}{1-\tau}\prod_{j=1}^d\left(1+2\zeta(\alpha\tau)\gamma_{d,j}^\tau\right)^{1/\tau}}^{=:c_{\alpha,d,\tau}}\\
&<16\,c_{\alpha,d,\tau}\,(\ln M)^2\left(\frac{9\,\ln M}{M}\right)^{1/\tau-1}
=16\,9^{1/\tau-1}c_{\alpha,d,\tau}\frac{(\ln M)^{1/\tau+1}}{M^{1/\tau-1}}.
\end{align*}
In order to avoid the logarithmic terms, we exploit $\ln x\le x^\delta/\delta$ for all $\delta\in(0,1)$ and we get
\begin{align*}
\sup_{\|f|\mathcal{H}_{\alpha,\boldgamma_d}(\T^d)\|\le 1}\|f-\operatorname{S}_{A_d(N)}^\Lambda f|L_\infty(\T^d)\|^2
&<16\,9^{1/\tau-1}c_{\alpha,d,\tau}\delta^{-1/\tau-1}\frac{1}{M^{1/\tau-1-\delta(1/\tau+1)}}.
\end{align*}
\end{proof}

The last theorem immediately raises the question of how to choose proper parameters $\delta$ and $\tau$ such that
one can reach best rates of convergence with respect to $M$ in \eqref{eq:err_estimate_arbitrary_delta_tau}.
The answer is given by the next corollary which also states the occurring constants in detail.

\begin{corollary}\label{cor:korobov_main}
With the requirements of Theorem \ref{thm:specific_err_estimate}, for each
$t\in(0,\frac{\tilde{\alpha}-1}{2})$, $1<\tilde{\alpha}\le\alpha$, there exist $\delta:=\delta(\tilde{\alpha},t)\in(0,1)$ and $\tau:=\tau(\tilde{\alpha},t)\in(1/\tilde{\alpha},1)\subset(1/\alpha,1)$ such that
there exists a constant
\begin{align*}
c_{\alpha,\tilde{\alpha},t,d}&:=4\,3^{1/\tau-1}\delta^{-\frac{1+\tau}{2\tau}}
\sqrt{\frac{\tau}{1-\tau}}\prod_{j=1}^d\left(1+2\zeta(\alpha\tau)\gamma_{d,j}^\tau\right)^{\frac{1}{2\tau}}\\
&<4\,3^{\tilde{\alpha}-1}\delta^{-\frac{\tilde{\alpha}+1}{2}}\sqrt{\frac{2}{\tilde{\alpha}-1}} \prod_{j=1}^d\left(1+2\zeta(\alpha\tau)\gamma_{d,j}^\tau\right)^{\frac{1}{2\tau}}
\end{align*}
which allows for the estimate
$$
\sup_{\|f|\mathcal{H}_{\alpha,\boldgamma_d}(\T^d)\|\le 1}\|f-\operatorname{S}_{A_d(N)}^\Lambda f|L_\infty(\T^d)\|
<c_{\alpha,\tilde{\alpha},t,d}M^{-t}.
$$
Here $\delta(\tilde{\alpha},t)$ and $\tau(\tilde{\alpha},t)$ can be chosen as 
\begin{align}
\delta(\tilde{\alpha},t)&:=\frac{2+\tilde{\alpha}}{2}-\sqrt{\frac{2+\tilde{\alpha}}{2}-\tilde{\alpha}+1+2t}\nonumber\\
\tau(\tilde{\alpha},t)&:=(\tilde{\alpha}-\delta(\tilde{\alpha},t))^{-1}.\label{eq:definition_suitable_tau}
\end{align}

\end{corollary}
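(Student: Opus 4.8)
The plan is to start from the last line of \eqref{eq:err_estimate_arbitrary_delta_tau} in Theorem~\ref{thm:specific_err_estimate}, in which the entire $M$-dependence sits in the single factor $M^{\frac{1+\delta}{2}-\frac{1-\delta}{2\tau}}$, and to choose the two free parameters $\delta\in(0,1)$ and $\tau\in(1/\alpha,1)$ so that this exponent equals exactly $-t$. Since there are two parameters but only one equation, I would first remove one degree of freedom by coupling $\tau$ to $\delta$ via $\tau:=(\tilde\alpha-\delta)^{-1}$, i.e.\ $\tfrac1\tau=\tilde\alpha-\delta$. This coupling is the natural one, because it forces $\tau\in(1/\tilde\alpha,1)\subset(1/\alpha,1)$ precisely when $\delta\in(0,\tilde\alpha-1)$, so the admissibility of $\tau$ is controlled entirely by a containment for $\delta$.

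With this substitution the exponent condition $\frac{1+\delta}{2}-\frac{1-\delta}{2\tau}=-t$ becomes, after clearing denominators, the quadratic
\begin{equation*}
\delta^2-(2+\tilde\alpha)\,\delta+(\tilde\alpha-1-2t)=0,
\end{equation*}
whose smaller root I would take as $\delta(\tilde\alpha,t)$; setting $\tau(\tilde\alpha,t):=(\tilde\alpha-\delta(\tilde\alpha,t))^{-1}$ then pins down both parameters and makes the $M$-exponent equal to $-t$. Substituted back into \eqref{eq:err_estimate_arbitrary_delta_tau}, this gives the bound $c_{\alpha,\tilde\alpha,t,d}\,M^{-t}$ with $c_{\alpha,\tilde\alpha,t,d}$ the first displayed constant.

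The genuinely important verification, and the step I expect to be the main obstacle, is checking that this root lands in the admissible window $\delta\in(0,\tfrac{\tilde\alpha-1}{2})$ for every $t\in(0,\tfrac{\tilde\alpha-1}{2})$, since that single containment is what simultaneously secures $\delta\in(0,1)$, $\tau\in(1/\tilde\alpha,1)$, and the sharp inequality needed for the constant. I would argue this by monotonicity: writing $g(\delta):=\frac{1+\delta}{2}-\frac{(1-\delta)(\tilde\alpha-\delta)}{2}$ for the exponent, one computes $g(0)=-\tfrac{\tilde\alpha-1}{2}$ and $g'(\delta)=1+\tfrac{\tilde\alpha}{2}-\delta>0$ on the relevant range, so $t=-g(\delta)$ decreases strictly from $\tfrac{\tilde\alpha-1}{2}$ as $\delta$ grows from $0$. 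The endpoint $t=0$ corresponds to $\delta=\frac{2+\tilde\alpha-\sqrt{\tilde\alpha^2+8}}{2}$, and a one-line squaring argument shows this quantity is $<\tfrac{\tilde\alpha-1}{2}$ exactly because $\tilde\alpha>1$. Hence $t\mapsto\delta(\tilde\alpha,t)$ maps $(0,\tfrac{\tilde\alpha-1}{2})$ into $(0,\tfrac{\tilde\alpha-1}{2})$, which is the source of the stated range of admissible rates $t$.

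Finally I would bound the constant factor by factor, leaving the product $\prod_{j=1}^d(1+2\zeta(\alpha\tau)\gamma_{d,j}^\tau)^{1/(2\tau)}$ untouched since it is common to both sides. Using $\tfrac1\tau=\tilde\alpha-\delta<\tilde\alpha$ gives $3^{1/\tau-1}<3^{\tilde\alpha-1}$; writing $\frac{1+\tau}{2\tau}=\frac{\tilde\alpha-\delta+1}{2}<\frac{\tilde\alpha+1}{2}$ together with $\delta<1$ (so that $x\mapsto\delta^{-x}$ is increasing) gives $\delta^{-\frac{1+\tau}{2\tau}}<\delta^{-\frac{\tilde\alpha+1}{2}}$; and the identity $\frac{\tau}{1-\tau}=\frac{1}{\tilde\alpha-\delta-1}$ combined with the freshly established $\delta<\tfrac{\tilde\alpha-1}{2}$ yields $\frac{\tau}{1-\tau}<\frac{2}{\tilde\alpha-1}$. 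Multiplying these three estimates reproduces precisely the second displayed bound for $c_{\alpha,\tilde\alpha,t,d}$ and completes the argument.
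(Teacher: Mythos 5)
Your proposal is correct and takes essentially the same route as the paper's own proof: the same coupling $\tau=(\tilde{\alpha}-\delta)^{-1}$, the same quadratic $\delta^2-(2+\tilde{\alpha})\delta+(\tilde{\alpha}-1-2t)=0$ solved by its smaller root (the paper parametrizes this via $\varepsilon:=\tilde{\alpha}-1-2t$ and writes the root explicitly, while you justify admissibility by monotonicity of the exponent $g(\delta)$ -- a cosmetic difference), and the identical factor-by-factor bounds $1/\tau<\tilde{\alpha}$, $\frac{1+\tau}{2\tau}<\frac{\tilde{\alpha}+1}{2}$, $\frac{\tau}{1-\tau}=\frac{1}{\tilde{\alpha}-\delta-1}<\frac{2}{\tilde{\alpha}-1}$ on the constant. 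One small slip in your framing: the containment $\delta\in\left(0,\frac{\tilde{\alpha}-1}{2}\right)$ does \emph{not} by itself secure $\delta<1$ when $\tilde{\alpha}>3$, but your own endpoint value $\frac{2+\tilde{\alpha}-\sqrt{\tilde{\alpha}^2+8}}{2}$ is always $<1$ since $\sqrt{\tilde{\alpha}^2+8}>\sqrt{\tilde{\alpha}^2}=\tilde{\alpha}$ (exactly the paper's one-line argument), so this closes immediately.
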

\begin{proof}
For fixed $t$ we determine $\varepsilon:=\tilde{\alpha}-1-2t$ such that $t=\frac{\tilde{\alpha}-1-\varepsilon}{2}$, i.e., $\varepsilon\in(0,\tilde{\alpha}-1)\subset(0,\alpha-1)$. Moreover, we fix
$$
\delta:=\frac{2+\tilde{\alpha}}{2}-\sqrt{\left(\frac{2+\tilde{\alpha}}{2}\right)^2-\varepsilon},
$$
which implies $\delta\in(0,1)$ since
\begin{align*}
0<\delta:=\frac{2+\tilde{\alpha}}{2}-\sqrt{\left(\frac{2+\tilde{\alpha}}{2}\right)^2-\varepsilon}
&<\frac{2+\tilde{\alpha}}{2}-\sqrt{\left(\frac{2+\tilde{\alpha}}{2}\right)^2-\tilde{\alpha}+1}\\
&=\frac{1}{2}(\tilde{\alpha}+2-\sqrt{\tilde{\alpha}^2+8})
<\frac{1}{2}(\tilde{\alpha}+2-\sqrt{\tilde{\alpha}^2})=1,
\end{align*}
and we set $\tau=\frac{1}{\tilde{\alpha}-\delta}$ which implies $\tau>\frac{1}{\tilde{\alpha}}$ and
with
\begin{align*}
\delta<\frac{1}{2}(\tilde{\alpha}+2-\sqrt{\tilde{\alpha}^2+8})\le \frac{1}{2}(\tilde{\alpha}+2-3)=\frac{1}{2}(\tilde{\alpha}-1)
\end{align*}
we estimate $\tau<\frac{1}{\tilde{\alpha}-\frac{\tilde{\alpha}-1}{2}}=\frac{1}{\tilde{\alpha}}+\frac{\tilde{\alpha}-1}{\tilde{\alpha}(\tilde{\alpha}+1)}=\frac{2}{\tilde{\alpha}+1}<1$, i.e., $\tau\in(1/\tilde{\alpha},1)\subset(1/\alpha,1)$.
\newline
Due to Theorem \ref{thm:specific_err_estimate}, we have
\begin{align*}
\sup_{\|f|\mathcal{H}_{\alpha,\boldgamma_d}(\T^d)\|\le 1}&\|f-\operatorname{S}_{A_d(N)}^\Lambda f|L_\infty(\T^d)\|\\
&<4\,3^{1/\tau-1}\delta^{-\frac{1+\tau}{2\tau}}M^{\frac{1+\delta}{2}-\frac{1-\delta}{2\tau}}
\sqrt{\frac{\tau}{1-\tau}}\prod_{j=1}^d\left(1+2\zeta(\alpha\tau)\gamma_{d,j}^\tau\right)^{\frac{1}{2\tau}}.
\intertext{Since $\tau$ and $\delta$ are fixed and in the right range for fixed $t$ and $\alpha$, we obtain}
\sup_{\|f|\mathcal{H}_{\alpha,\boldgamma_d}(\T^d)\|\le 1}&\|f-\operatorname{S}_{A_d(N)}^\Lambda f|L_\infty(\T^d)\|
<
c_{\alpha,\tilde{\alpha},t,d}M^{\frac{1+\delta}{2}-\frac{1-\delta}{2\tau}},
\end{align*}
where $c_{\alpha,\tilde{\alpha},t,d}$ depends on $d$, $\boldgamma_d$, $\alpha$ as well as $\tilde{\alpha}$ and $t$ since $\delta$ and $\tau$ depend only on $\tilde{\alpha}$ and $\varepsilon=\tilde{\alpha}-1-2t$.
We verify the main rate in $M$
\begin{align*}
\frac{1+\delta}{2}-\frac{1-\delta}{2\tau}&=\frac{1+\delta}{2}-\frac{(1-\delta)(\tilde{\alpha}-\delta)}{2}=\frac{1}{2}\left(1-\tilde{\alpha}+2\delta+\tilde{\alpha}\delta-\delta^2\right)=-t
\end{align*}
which is caused by the choice of $\delta$ such that
$\varepsilon=2\delta+\tilde{\alpha}\delta-\delta^2$.
Furthermore, the additional estimate on $c_{\alpha,\tilde{\alpha},t,d}$ holds:
\begin{align*}
c_{\alpha,\tilde{\alpha},t,d}&<4\,3^{\tilde{\alpha}-1}\delta^{-\frac{\tilde{\alpha}+1}{2}}\sqrt{\frac{2}{\tilde{\alpha}-1}} \prod_{j=1}^d\left(1+2\zeta(\alpha\tau)\gamma_{d,j}^\tau\right)^{\frac{1}{2\tau}},
\end{align*}
due to the inequalities $1/\tau<\tilde{\alpha}$, $(1+\tau)/(2\tau)<(\tilde{\alpha}+1)/2$, and $\tau/(1-\tau)<2/(\tilde{\alpha}-1)$.
\end{proof}

\begin{remark}
The spaces $\mathcal{H}_{\alpha,\boldgamma_d}(\T^d)$ are function spaces with dominating mixed smoothness.
Fixing the dimension $d$ and considering $\tilde{\alpha}=\alpha$ in Corollary~\ref{cor:korobov_main} yields
a general approximation result for those function spaces. We observe an asymptotic behavior of the sampling error 
$\sup_{\|f|\mathcal{H}_{\alpha,\boldgamma_d}(\T^d)\|\le 1}\|f-\operatorname{S}_I^\Lambda f|L_\infty(\T^d)\|$ that is bounded by $c_{\alpha,t,d}M^{-t}$ for each $t<\frac{\alpha-1}{2}$, i.e., the sampling rate is almost optimal.
More detailed, but for the following purposes less convenient estimates on the cardinalities $|A_d(N)|$ of the frequency sets $A_d(N)$ and the worst case truncation errors $\Sigma_{\bcancel{A_d(N)}\bcancel{A_d(N)}}$
will lead to asymptotic estimates on the sampling errors that are optimal with respect to the exponent on the number $M$ of sampling values, i.e., one achieves 
$$\sup_{\|f|\mathcal{H}_{\alpha,\boldgamma_d}(\T^d)\|\le 1}\|f-\operatorname{S}_I^\Lambda f|L_\infty(\T^d)\|\le C M^{-\frac{\alpha-1}{2}}\log^{b}{M},$$
which even improves the results in \cite{KaVo19}. However, the exponent $b$ at the logarithmic term is linear in the dimension $d$ and not optimal.
\end{remark}

Up to now, the constants $c_{\alpha,\tilde{\alpha},t,d}$ heavily depend on the dimension. The following corollaries categorize the properties of the weights $\boldgamma_d$ that lead to bounds on the constants that are independent on the dimension $d$ or polynomials in $d$. 

\begin{corollary}\label{cor:kor_strong_tractability}
When choosing $\tilde{\alpha}=\min\{\alpha,1/s_\boldgamma\}$ in Corollary~\ref{cor:korobov_main} and assuming that
$$
s_\boldgamma:=\inf\left\{s \ge 0\,\colon \sup_{d\ge 1}\sum_{j=1}^d\gamma_{d,j}^s<\infty\right\}<1,
$$
holds,
the $L_\infty(\T^d)$ worst case sampling error is bounded by terms that do not depend on the dimension~$d$, i.e.,
$$
\sup_{\|f|\mathcal{H}_{\alpha,\boldgamma_d}(\T^d)\|\le 1}\|f-\operatorname{S}_{A_d(N)}^\Lambda f|L_\infty(\T^d)\|
<c_{\alpha,\tilde{\alpha},t}M^{-t},
$$
we observe strong tractability for each $t\in(0,\frac{\tilde{\alpha}-1}{2})$.
\end{corollary}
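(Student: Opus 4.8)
The plan is to invoke Corollary~\ref{cor:korobov_main} verbatim and then show that the only dimension-dependent factor appearing in the constant $c_{\alpha,\tilde{\alpha},t,d}$ stays bounded as $d\to\infty$. First I would check that the choice $\tilde{\alpha}=\min\{\alpha,1/s_\boldgamma\}$ is admissible: since $\alpha>1$ and $s_\boldgamma<1$ forces $1/s_\boldgamma>1$, we have $\tilde{\alpha}>1$, so the interval $(0,\frac{\tilde{\alpha}-1}{2})$ for $t$ is nonempty and Corollary~\ref{cor:korobov_main} applies, producing $\delta\in(0,1)$ and $\tau\in(1/\tilde{\alpha},1)\subset(1/\alpha,1)$ together with the estimate
$$
\sup_{\|f|\mathcal{H}_{\alpha,\boldgamma_d}(\T^d)\|\le 1}\|f-\operatorname{S}_{A_d(N)}^\Lambda f|L_\infty(\T^d)\|<c_{\alpha,\tilde{\alpha},t,d}M^{-t}.
$$
Inspecting the upper bound on $c_{\alpha,\tilde{\alpha},t,d}$ from Corollary~\ref{cor:korobov_main}, every factor except the product $\prod_{j=1}^d(1+2\zeta(\alpha\tau)\gamma_{d,j}^\tau)^{1/(2\tau)}$ depends only on $\alpha$, $\tilde{\alpha}$, and $t$. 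Hence it suffices to bound this product uniformly in $d$.

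The decisive observation is that $\tau>1/\tilde{\alpha}=\max\{1/\alpha,s_\boldgamma\}\ge s_\boldgamma$, so $\tau>s_\boldgamma$ strictly. I would next record the monotonicity $\gamma_{d,j}^{\tau}\le\gamma_{d,j}^{s}$ whenever $s\le\tau$, which holds because $\gamma_{d,j}\le\gamma_{d,1}\le1$; consequently the set $\{s\ge 0\colon \sup_{d\ge1}\sum_{j=1}^d\gamma_{d,j}^{s}<\infty\}$ is an upper interval with infimum $s_\boldgamma$. Since $\tau$ lies strictly above this infimum, $\tau$ itself belongs to the set, so $S:=\sup_{d\ge1}\sum_{j=1}^d\gamma_{d,j}^{\tau}<\infty$.

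With $S<\infty$ in hand, the product is controlled by the elementary inequality $1+x\le\e^{x}$ through
$$
\prod_{j=1}^d\left(1+2\zeta(\alpha\tau)\gamma_{d,j}^\tau\right)\le\exp\left(2\zeta(\alpha\tau)\sum_{j=1}^d\gamma_{d,j}^\tau\right)\le\exp\left(2\zeta(\alpha\tau)\,S\right),
$$
which is finite and independent of $d$; here $\zeta(\alpha\tau)<\infty$ because $\tau>1/\alpha$ yields $\alpha\tau>1$. Taking the $1/(2\tau)$-th power preserves both finiteness and $d$-independence, so $c_{\alpha,\tilde{\alpha},t,d}\le c_{\alpha,\tilde{\alpha},t}$ for a constant free of $d$, giving the asserted strong tractability. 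The only genuine subtlety is the passage from $\tau>s_\boldgamma$ to $S<\infty$; the remainder is the routine $1+x\le\e^{x}$ estimate. This first step is precisely what motivates the choice $\tilde{\alpha}=\min\{\alpha,1/s_\boldgamma\}$: it is the largest admissible value of $\tilde{\alpha}$, permitting the summability exponent $\tau$ to be pushed down toward $s_\boldgamma$ while remaining strictly above it and below $1$.
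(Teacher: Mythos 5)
Your proof is correct and takes essentially the same route as the paper: you invoke Corollary~\ref{cor:korobov_main} with $\tilde{\alpha}=\min\{\alpha,1/s_\boldgamma\}>1$ and bound the only $d$-dependent factor, the weight product, via $1+x\le\e^{x}$ together with $\sup_{d\ge1}\sum_{j=1}^d\gamma_{d,j}^\tau<\infty$ for $\tau>s_\boldgamma$. You are in fact slightly more explicit than the paper, which asserts this uniform bound directly, whereas you justify it by the monotonicity of $s\mapsto\gamma_{d,j}^{s}$ (valid since $\gamma_{d,j}\le 1$) showing that any $\tau$ strictly above the infimum $s_\boldgamma$ lies in the summability set.
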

\begin{proof}
Since $s_\boldgamma<1$ , we observe $\tilde{\alpha}>1$ and we apply Corollary \ref{cor:korobov_main}. For fixed $\tau\in(1/\tilde{\alpha},1)$, the products
$$\prod_{j=1}^d\left(1+2\zeta(\alpha\tau)\gamma_{d,j}^\tau\right)^{\frac{1}{2\tau}}
\le\e^{\frac{2\zeta(\alpha\tau)\sum_{j=1}^{d}\gamma_{d,j}^\tau}{2\tau}}
\le C^{\prod}_{\alpha,\tilde{\alpha},t}<\infty$$
are bounded without dependence on the dimension $d$.
Hence, we observe $$
c_{\alpha,\tilde{\alpha},t,d}<4\,3^{\tilde{\alpha}-1}\delta^{-\frac{\tilde{\alpha}+1}{2}}\sqrt{\frac{2}{\tilde{\alpha}-1}}C^{\prod}_{\alpha,\tilde{\alpha},t}=:c_{\alpha,\tilde{\alpha},t}.
$$
\end{proof}

\begin{corollary}\label{cor:kor_tractability}
When choosing $\tilde{\alpha}=\min\{\alpha,1/t_\boldgamma\}$ in Corollary~\ref{cor:korobov_main} and assuming that
$$
t_\boldgamma:=\inf\left\{t \ge 0\,\colon \sup_{d\ge 1}\frac{\sum_{j=1}^d\gamma_{d,j}^t}{\ln(d+1)}<\infty\right\}<1,
$$
the $L_\infty(\T^d)$ worst case sampling error is bounded by terms that depend polynomially on the dimension~$d$, i.e.,
$$
\sup_{\|f|\mathcal{H}_{\alpha,\boldgamma_d}(\T^d)\|\le 1}\|f-\operatorname{S}_{A_d(N)}^\Lambda f|L_\infty(\T^d)\|
<c_{\alpha,\tilde{\alpha},t}\,d^{\beta(\alpha,\tilde{\alpha},t)}M^{-t},
$$
we observe tractability for each $t\in(0,\frac{\tilde{\alpha}-1}{2})$.
\end{corollary}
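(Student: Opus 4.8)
The plan is to follow the proof of Corollary~\ref{cor:kor_strong_tractability} almost verbatim, the only structural change being that the partial sums $\sum_{j=1}^d\gamma_{d,j}^\tau$ are now permitted to grow logarithmically in $d$ instead of remaining uniformly bounded. First I would record that the hypothesis $t_\boldgamma<1$ forces $\tilde\alpha=\min\{\alpha,1/t_\boldgamma\}>1$, so that the assumptions of Corollary~\ref{cor:korobov_main} are satisfied and, for every $t\in(0,\frac{\tilde\alpha-1}{2})$, that corollary supplies admissible parameters $\delta\in(0,1)$ and $\tau\in(1/\tilde\alpha,1)$ via \eqref{eq:definition_suitable_tau}, together with the $d$-dependent bound $c_{\alpha,\tilde\alpha,t,d}M^{-t}$.

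The decisive observation is that $\tau>1/\tilde\alpha=\max\{1/\alpha,t_\boldgamma\}\ge t_\boldgamma$ holds with strict inequality. Since the weights satisfy $0<\gamma_{d,j}\le 1$, the map $s\mapsto\gamma_{d,j}^s$ is nonincreasing, so $\sum_{j=1}^d\gamma_{d,j}^\tau\le\sum_{j=1}^d\gamma_{d,j}^{s}$ whenever $s\le\tau$. Choosing an $s$ with $t_\boldgamma<s<\tau$ lying in the defining set of $t_\boldgamma$, the definition of $t_\boldgamma$ then produces a finite constant
$$
C_\tau:=\sup_{d\ge 1}\frac{\sum_{j=1}^d\gamma_{d,j}^\tau}{\ln(d+1)}<\infty,
$$
whence $\sum_{j=1}^d\gamma_{d,j}^\tau\le C_\tau\ln(d+1)$ for all $d\ge 1$.

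With this linear-in-$\ln(d+1)$ bound in hand I would estimate the product factor appearing in Corollary~\ref{cor:korobov_main} exactly as in the strongly tractable case, using $1+x\le\e^x$:
$$
\prod_{j=1}^d\left(1+2\zeta(\alpha\tau)\gamma_{d,j}^\tau\right)^{\frac{1}{2\tau}}
\le\e^{\frac{\zeta(\alpha\tau)}{\tau}\sum_{j=1}^d\gamma_{d,j}^\tau}
\le\e^{\frac{\zeta(\alpha\tau)\,C_\tau}{\tau}\ln(d+1)}
=(d+1)^{\frac{\zeta(\alpha\tau)\,C_\tau}{\tau}}.
$$
Setting $\beta(\alpha,\tilde\alpha,t):=\zeta(\alpha\tau)C_\tau/\tau$ and absorbing the harmless factor $(d+1)^\beta\le(2d)^\beta=2^\beta d^\beta$ for $d\ge 1$ into the constant converts the $d$-dependent bound of Corollary~\ref{cor:korobov_main} into the asserted form $c_{\alpha,\tilde\alpha,t}\,d^{\beta(\alpha,\tilde\alpha,t)}M^{-t}$, which is polynomial in $d$ and hence yields tractability.

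I expect the only genuinely delicate point to be ensuring that $\tau$ can be taken strictly above $t_\boldgamma$ while staying below $1$; this is precisely what the choice $\tilde\alpha=\min\{\alpha,1/t_\boldgamma\}$ combined with $\tau\in(1/\tilde\alpha,1)$ guarantees, and the monotonicity of $s\mapsto\gamma_{d,j}^s$ is what allows me to pass from membership in the defining set of $t_\boldgamma$ at a slightly smaller exponent to the finiteness of $C_\tau$ at the exponent $\tau$ actually in use. Everything else is the bookkeeping already performed for Corollary~\ref{cor:kor_strong_tractability}.
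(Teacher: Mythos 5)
Your proposal is correct and takes essentially the same route as the paper's proof: choose $\tau>1/\tilde{\alpha}\ge t_{\boldgamma}$ via \eqref{eq:definition_suitable_tau}, deduce $\sum_{j=1}^d\gamma_{d,j}^\tau\le C_\tau\ln(d+1)$ from the definition of $t_{\boldgamma}$, and convert the product factor into a polynomial power of $d$ absorbed into the constant from Corollary~\ref{cor:korobov_main}. Your explicit monotonicity-of-$s\mapsto\gamma_{d,j}^s$ argument for upgrading the infimum to finiteness of the supremum at the exponent $\tau$ is a small elaboration the paper leaves implicit, and the remaining differences ($\ln(d+1)$ versus $\ln(2d)$, no ceiling applied to $\beta$) are immaterial.
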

\begin{proof}
For fixed $t\in(0,\frac{\tilde{\alpha}-1}{2})$, we determine $\tau>1/\tilde{\alpha}\ge t_\boldgamma$ as stated in \eqref{eq:definition_suitable_tau}. We observe 
$$
\sup_{d\ge 1}\frac{\sum_{j=1}^d\gamma_{d,j}^\tau}{\ln(d+1)}=S_{\tilde{\alpha},t}<\infty
$$
and estimate
$$\prod_{j=1}^d\left(1+2\zeta(\alpha\tau)\gamma_{d,j}^\tau\right)^{\frac{1}{2\tau}}\le \e^{\,\frac{\zeta(\alpha\tau)\,S_{\tilde{\alpha},t}}{\tau}\ln(2d)}=(2d)^{\frac{\zeta(\alpha\tau)\,S_{\tilde{\alpha},t}}{\tau}}\le \tilde{c}_{\alpha,\tilde{\alpha},t}d^{\beta(\alpha,\tilde{\alpha},t)},$$
where $\beta(\alpha,\tilde{\alpha},t):=\ceil{\frac{\zeta(\alpha\tau)\,S_{\tilde{\alpha},t}}{\tau}}$
does not depend on the dimension $d$.
Hence, we observe for $c_{\alpha,\tilde{\alpha},t,d}$ from Theorem \ref{thm:specific_err_estimate}
$$
c_{\alpha,\tilde{\alpha},t,d}<4\,3^{\tilde{\alpha}-1}\delta^{-\frac{\tilde{\alpha}+1}{2}}\sqrt{\frac{2}{\tilde{\alpha}-1}}\tilde{c}_{\alpha,\tilde{\alpha},t}d^{\beta(\alpha,\tilde{\alpha},t)}=:c_{\alpha,\tilde{\alpha},t}d^{\beta(\alpha,\tilde{\alpha},t)},
$$
which is actually a polynomial in the dimension $d$. Again, we stress that $\delta$ and $\tau$ are completely determined in terms of
$\tilde{\alpha}$ and $t$, cf.~\eqref{eq:definition_suitable_tau}, and thus the constant as well as the exponent $\beta$ on the right hand side of the last inequality only depend on $\alpha$, $\tilde{\alpha}$, and $t$.
\end{proof}

\begin{remark}
We stress the fact that the restrictions $s_\boldgamma<1$ and $t_\gamma<1$ in Corollaries~\ref{cor:kor_strong_tractability} and~\ref{cor:kor_tractability} are necessary in order to achieve an admissible interval for the choice of $\tau$ in Corollary~\ref{cor:korobov_main}, i.e., these restrictions are caused by the used proof technique.
Nevertheless, these restrictions coincide with those that are stated in \cite[Theorem~11]{KuWaWo08}, where
the authors proved that (strong) tractability can not hold for $L_\infty(\T^d)$ approximation for
$t_\boldgamma\ge 1$ ($s_\boldgamma\ge 1$) -- even in a more general setting.
Hence, these requirements on the summability of the weight sequence $\boldgamma$ are natural ones
and do not additionally restrict the function spaces for which the statements of Corollaries~\ref{cor:kor_strong_tractability} and~\ref{cor:kor_tractability}
hold.
\end{remark}

\begin{remark}\label{rem:tractability}
Our sampling method allows for error estimates that reveal (strong) tractability and the exponent at the number of sampling values $M$ is $-t$ with the single restriction that $t<\frac{\tilde{\alpha}-1}{2}$ holds. From~\cite{KuWaWo08} one knows that tractability may hold for exponents $t$ at most $\frac{\tilde{\alpha}-1}{2}$. Thus, the presented sampling method using samples along multiple rank-1 lattices is nearly optimal for Korobov spaces of the considered type even with respect to tractability. Moreover, the presented result improves known tractability results for lattice algorithms, cf.~\cite{KuWaWo09}. Up to now, tractability of the $L_\infty(\T^d)$ approximation in combination with constructive methods was investigated for single rank-1 lattices. The best known lower bounds on $t$, where $-t$ is the exponent at the number $M$ of used sampling values, suffer from factors of $1/2$
and less than $2/3$ for $\alpha\in(1,2]$ and $\alpha>2$, respectively,
compared to the best possible exponent $\frac{\tilde{\alpha}-1}{2}$ for general linear information.

In addition, our sampling approach improves more general tractability results from~\cite{KuWaWo09b}. Therein, it is shown that there exist sampling sets such that the $L_\infty(\T^d)$ approximation is tractable. The corresponding exponents $-t$ at $M$ are bounded by $\frac{\tilde{\alpha}-1}{2}\frac{1}{1+1/\tilde{\alpha}}\le t \le \frac{\tilde{\alpha}-1}{2}$, where the lower bound is the crucial point that needs to be improved in order to verify better asymptotic tractability behavior of sampling methods. Sampling along multiple rank\mbox{-}1 lattices provides exactly this improvement, i.e., the exponent $t$ at $1/M$  can be arbitrarily close to the upper bound $\frac{\tilde{\alpha}-1}{2}$, which stems from considerations of approximating functions using general linear information, cf.~\cite{KuWaWo08}. At this point, we again stress the fact that --  in contrast to the considerations in~\cite{KuWaWo09b} -- the generation of the sampling sets for the multiple rank-1 lattice approach is completely constructive.
\end{remark}

\subsection{Application to sampling numbers}\label{sec:app_samp_numbers}

A general concept to describe the approximability of a bounded linear operator 
$\operatorname{T} \colon X\to Y$, where $X$ and $Y$ are Banach spaces, is the definition of so called
approximation numbers
$$
a_n(\operatorname{T})=\inf\{\|\operatorname{T}-\operatorname{A}\|\,\colon\, \operatorname{rank}\operatorname{A}<n\},
$$
which is the optimal error of approximating the operator $\operatorname{T}$ by operators of rank less than $n$.

The setting of our particular interest are the $n$-th approximation numbers of the identity operator $\operatorname{I}_d$
which maps from specific Hilbert spaces $\mathcal{H}_r(\T^d)$ of (smooth and continuous) periodic functions to $L_\infty(\T^d)$.
Due to \cite[Theorem 3.4]{CoKueSi16} the associated approximation numbers  are determined as
\vspace*{-0.5em}
\begin{align}a_n(\operatorname{I}_d\,\colon\, \mathcal{H}_r(\T^d)\rightarrow L_\infty(\T^d))=\left(\sum_{j=n}^\infty r_d(\boldk_j)^{-1}\right)^{1/2},\label{eq:an_repro_kernel}
\end{align}
where $\{\boldk_j\}_{j=1}^\infty$ is a rearrangement of all elements within $\Z^d$ such that
the sequence $\{r_d(\boldk_j)\}_{j=1}^\infty$ is a non-decreasing sequence, i.e., $r_d(\boldk_j)\le r_d(\boldk_{j+1})$ holds for all $j\in\N$.
One interpretation of the approximation numbers $a_n(\operatorname{I}_d\colon \mathcal{H}_r(\T^d)\rightarrow L_\infty(\T^d))$ is the following:
The linear operator of rank less than $n$ yielding the best possible worst case error, i.e., that achieves
the approximation number $a_n$, is a mapping of the function $f$ to its exact Fourier partial sum
$S_{I^n}f:=\sum_{\boldk\in {I^n}}c_\boldk(f)\e^{2\pi\ii\boldk\cdot\circ}$, where $I^n$, $n\in\N$, is defined by
\vspace*{-0.5em}
\begin{align*}
|I^n|=n-1\quad \textnormal{and}\quad I^n:=\left\{\boldk\in\Z^d\colon r_d(\boldk)\le r_d(\boldh) \textnormal{ for all }\boldk\in I^n \textnormal{ and all }\boldh\in\Z^d\setminus I^n\right\}.
\label{eq:def_I_appr_num}
\end{align*}
Note that $I^n$ as well as the numbering of $\{\boldk_j\}_{j=1}^\infty$ is not uniquely defined.
Moreover, for sampling operators, there exists the concept of sampling numbers, which classifies the quality
of approximations of functions based on samples with respect to the number of samples.
In our specific setting, the corresponding sampling numbers are defined by
\vspace*{-0.5em}
\begin{align*}
g_M(\mathcal{H}_r(\T^d)&,L_\infty(\T^d)):=\\
&\inf_{\mathcal{X},|\mathcal{X}|\le M}\inf_{\operatorname{A}\,\colon\,\C^{|\mathcal{X}|}\to L_\infty(\T^d)}
\sup_{\|f|\mathcal{H}_r(\T^d)\|\le 1}\left\|f-\operatorname{A}\left(\{f(\boldx)\}_{\boldx\in\mathcal{X}}\right)|L_\infty(\T^d)\right\|,
\end{align*}
which can be described as the best possible worst case error of approximating a function $f$ that belongs to the unit ball of the space $\mathcal{H}_r(\T^d)$ by the best possible sampling strategy using not more than $M$ sampling values.

Clearly, the worst case sampling error which is determined by the specific sampling method presented in this paper yields an upper bound
on $g_M(\mathcal{H}_r(\T^d),L_\infty(\T^d))$.
Moreover, the sequence $g_M(\mathcal{H}_r(\T^d),L_\infty(\T^d))$ is nonincreasing in $M$. Taking this into account, 
we observe the following statement.

\begin{theorem}\label{thm:samp_app}
Let $\mathcal{H}_r(\T^d)$ be a reproducing kernel Hilbert space as defined in Section~\ref{sec:repro_kernel_hilbert_space}. In addition, we assume that
$I^{n}\subset[-n+1,n-1]^d$ holds for all $n\in\N$. Then, we estimate
\begin{align*}
a_{M}(\mathcal{H}_r(\T^d),L_\infty(\T^d))\le
g_{M}(\mathcal{H}_r(\T^d),L_\infty(\T^d))
\lesssim
\frac{a_{M/\log{M}}(\operatorname{I}_d\,\colon\, \mathcal{H}_r(\T^d)\rightarrow L_\infty(\T^d))}{(\ln{M})^{-1}}.
\end{align*}
\end{theorem}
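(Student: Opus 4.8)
The plan is to establish the two inequalities separately. The left-hand bound $a_M\le g_M$ is the routine direction, so I would dispatch it quickly: every admissible sampling scheme with at most $M$ nodes is a bounded linear operator $\mathcal{H}_r(\T^d)\to L_\infty(\T^d)$ of rank at most $M$ (point evaluations are continuous on $\mathcal{H}_r(\T^d)$ since $K_d$ is continuous), so its worst-case error cannot undercut the corresponding approximation number, whose exact value \eqref{eq:an_repro_kernel} is attained by the Fourier partial sum $S_{I^n}$ built from general linear information. Taking the infimum over sampling strategies yields $a_M\le g_M$, the shift in the index being absorbed by the convention $\operatorname{rank}\operatorname{A}<n$. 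The substance lies entirely in the right-hand bound, which I would obtain by instantiating the general framework of Section~\ref{sec:general_framework} at the \emph{optimal} frequency set.

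Fix $M$ and choose an integer $n\asymp M/\log M$, small enough that the sampling budget below stays at or below $M$. Let $I:=I^{n}$ be the optimal set of $|I^{n}|=n-1$ frequencies carrying the smallest weights $r_d(\boldh)$, and abbreviate $a_{n}:=a_{n}(\operatorname{I}_d\,\colon\,\mathcal{H}_r(\T^d)\to L_\infty(\T^d))$. Then \eqref{eq:an_repro_kernel} and \eqref{eq:trunc_error} identify the worst case truncation error with the squared approximation number,
\[
\Sigma_{\bcancel{I^{n}}\bcancel{I^{n}}}=\sum_{\boldh\in\Z^d\setminus I^{n}}r_d(\boldh)^{-1}=\sum_{j=n}^\infty r_d(\boldk_j)^{-1}=a_{n}^{2}.
\]
The standing hypothesis $I^{n}\subset[-n+1,n-1]^d$ is precisely what lets me transplant the construction behind Lemma~\ref{lem:estimate_sampling_set} to this general setting: the box inclusion forces reduction modulo any integer $\ge 2n-1$ to be injective on $I^{n}$, so the existence result of \cite{Kae17}, combined with Bertrand's postulate \cite{Ba06} for a common prime lattice size $M_1=\dots=M_L=P\lesssim n$, furnishes a multiple rank-1 lattice $\Lambda(\boldz_1,M_1,\ldots,\boldz_L,M_L)$ with $L\lesssim\log n$ and $I^{n}=\bigcupdot_{\ell=1}^L I_\ell$, where $I_\ell$ is as in \eqref{eq:def_Iell}. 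Its cardinality then satisfies $|\Lambda|\le 1+L(P-1)\lesssim n\log n\le M$.

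With this lattice fixed, Theorem~\ref{cor:gen_framework_main_result} gives
\[
\sup_{\|f|\mathcal{H}_r(\T^d)\|\le 1}\|f-\operatorname{S}_{I^{n}}^\Lambda f\,|L_\infty(\T^d)\|\le (L+1)\sqrt{\Sigma_{\bcancel{I^{n}}\bcancel{I^{n}}}}=(L+1)\,a_{n}.
\]
Since $\operatorname{S}_{I^{n}}^\Lambda$ is a legitimate sampling algorithm using $|\Lambda|\le M$ values and $g_M$ is nonincreasing in its index, the above dominates $g_{|\Lambda|}\ge g_M$, whence $g_M\le g_{|\Lambda|}\le (L+1)a_{n}$. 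Finally I would insert $L+1\lesssim\log n\lesssim\log M$ and the choice $n\asymp M/\log M$ to conclude $g_M\lesssim(\log M)\,a_{n}\lesssim a_{M/\log M}\ln M$.

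I expect the genuine obstacle to be the lattice construction at the general optimal set $I^{n}$: it is the box hypothesis $I^{n}\subset[-n+1,n-1]^d$ that carries the parameter control $L\lesssim\log n$ and $M_\ell\lesssim n$ over from the Korobov case of Lemma~\ref{lem:estimate_sampling_set} to an arbitrary admissible $\mathcal{H}_r(\T^d)$, and everything downstream is an application of Theorem~\ref{cor:gen_framework_main_result} plus the identification $\Sigma_{\bcancel{I^{n}}\bcancel{I^{n}}}=a_{n}^{2}$. The remaining difficulty is purely arithmetic: one must pick $n\asymp M/\log M$ so that $L+1\lesssim\log M$ and $|\Lambda|\le M$ hold simultaneously, and the resulting constant-factor shift in the argument of $a$ is then hidden inside the $\lesssim$.
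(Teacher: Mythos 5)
Your proposal is correct and takes essentially the same route as the paper's proof: transplant the construction of Lemma~\ref{lem:estimate_sampling_set} to $I^{n}$ (the box hypothesis $I^{n}\subset[-n+1,n-1]^d$ securing the injectivity-mod-$M_\ell$ condition from \cite{Kae17} with $L\lesssim\log n$ and a common prime $M_\ell\lesssim n$ via \cite{Ba06}), identify $\Sigma_{\bcancel{I^{n}}\bcancel{I^{n}}}=a_n^2$ through \eqref{eq:an_repro_kernel}, apply Theorem~\ref{cor:gen_framework_main_result} to get the $(L+1)a_n$ bound, and invert $M\asymp n\log n$. One cosmetic caveat: your quick dispatch of $a_M\le g_M$ treats the recovery map as linear of rank at most $M$, whereas the paper's definition of $g_M$ admits arbitrary maps $\operatorname{A}$ (the paper itself asserts this inequality without proof, so nothing downstream is affected).
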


\begin{proof}
We follow the argumentation
of the proof of Lemma \ref{lem:estimate_sampling_set}. Due to the assumptions on $I^{n}$,
there exists a multiple rank-1 lattice $\Lambda(\boldz_1,M_1,\ldots, \boldz_L,M_L)$
with $L\le \max(3\ln|I^n|,1)$ and $M_\ell\le 3|I^n|$, $\ell=1,\ldots,L$, that fulfills \eqref{eq:reco_prop}.
We apply Theorem~\ref{cor:gen_framework_main_result} and obtain
\begin{eqnarray}
&&\hspace*{-6em}g_{9(n-1)\max(\ln(n-1),1)}(\mathcal{H}_r(\T^d),L_\infty(\T^d))\le
g_{|\Lambda|}(\mathcal{H}_r(\T^d),L_\infty(\T^d))\nonumber\\
&\le&
 \sup_{\|f|\mathcal{H}_r(\T^d)\|\le 1}\left\|f-\operatorname{S}_{I^n}^\Lambda f|L_\infty(\T^d)\right\|
 \overset{\text{Thm.~\ref{cor:gen_framework_main_result}}}{\le} (L+1)\left(\sum_{j=n}^\infty r_d(\boldk_j)^{-1}\right)^{1/2}\nonumber\\
&\overset{\eqref{eq:an_repro_kernel}}{\le}&
\max(3\,\ln(n-1)+1,2)\;a_{n}(\operatorname{I}_d\,\colon\, \mathcal{H}_r(\T^d)\rightarrow L_\infty(\T^d)),\label{eq:sampl_num_appr_num}
\end{eqnarray}
\end{proof}
The statement of the last theorem can be generalized using the following constraints:
\begin{enumerate}
\item $\mathcal{H}_r(\T^d)$ is a reproducing kernel Hilbert space as defined in Section~\ref{sec:repro_kernel_hilbert_space}, %
\item \label{constraint:n_I^n}$\exists c>0$ such that for each $n\ge 2$ the subset relation $I^n \subset [-c\,n,c\,n]^d$ holds.
\end{enumerate}
One may interpret \eqref{eq:sampl_num_appr_num} in the following way:\newline
The worst case error measured in the $L_\infty(\T^d)$ norm of the approximation of functions belonging to the reproducing kernel Hilbert space $\mathcal{H}_r(\T^d)$ by a trigonometric polynomial that is supported by the frequency set $I^n$ containing $n-1$ frequencies is best possible, if one chooses the exact Fourier partial sum supported by frequencies with smallest possible weights $r_d( \boldk )$ as approximant. The approximation number $a_n(\operatorname{I}_d\,\colon\, \mathcal{H}_r(\T^d)\rightarrow L_\infty(\T^d))$ specifies the corresponding error.
A suitable approximation of the aforementioned Fourier partial sum can be
computed from a number of $|\Lambda|<9(n-1)\max(\ln(n-1),1)$ samples, where the corresponding worst case sampling error 
$g_{|\Lambda|}(\mathcal{H}_r(\T^d),L_\infty(\T^d))$
is bounded from above
by the approximation number $a_n(\operatorname{I}_d\,\colon\, \mathcal{H}_r(\T^d)\rightarrow L_\infty(\T^d))$ that is related to the frequency set $I^n$ times a logarithmic factor in the cardinality $n-1$ of $I^n$ and a constant less than four.

However, this estimate holds even for small values of $n$. Note that the sequence $\{a_n\}_{n\in\N}$ is monotonically decreasing.
Assuming that  $\{a_n\}_{n\in\N}$ decreases faster than $(3\,\ln(n-1)+1)^{-1}$, we observe a reasonably practicable
sampling method that allows for treating approximation problems even in pre-asymptotic settings, which
are more likely to be the rule than the exception -- at least in dimensions $d>3$.

\begin{remark}\label{rem:approx_num}
The very general result of this section is subject to extremely weak constraints.
E.g., constraint \ref{constraint:n_I^n} is actually a restriction on the weight function
$r_d$. In any cases where the weight function $r_d$ generates index sets $I^n$, where $I^2=\{\boldzero\}$
and each $I^n$ is downward closed, i.e., all $I^n$'s are lower sets, we certainly observe $I^n\subset[-n+1,n-1]^d$.
Furthermore, suitable reconstructing rank-1 lattices $\Lambda$, can be determined
by possibly multiple, but only a few number of applications of \cite[Algorithm~3]{Kae17}.
\end{remark}

\section{Conclusions}
We analyzed a recently developed sampling strategy for multivariate periodic functions
and determined upper bounds on the corresponding worst case sampling error measured in the $L_\infty(\T^d)$ norm.
The strategy is to sample functions from reproducing kernel Hilbert spaces
along multiple rank-1 lattices and approximate the function by approximating a suitable set
of Fourier coefficients and assembling corresponding approximating trigonometric polynomials.
It turns out that the considered worst case approximation errors are almost optimal with respect to the
space of trigonometric polynomials that is used for computing the approximant, cf. Theorem~\ref{cor:gen_framework_main_result}.
The crucial assumption on the used multiple rank-1 lattice is the specific reconstruction property $I=\bigcupdot_{\ell=1}^{L}I_\ell$, $I_\ell$ as stated in~\eqref{eq:def_Iell}, cf.~\cite{Kae17}.

Under certain mild assumptions, cf. Section~\ref{sec:app_samp_numbers},
there exist multiple rank-1 lattices that fulfill the reconstruction property
and, moreover,  their number of sampling nodes is bounded from above by terms
$\lesssim |I|\log|I|$. This yields nearly optimal estimates of sampling numbers in terms of approximation numbers, cf. Theorem~\ref{thm:samp_app}.
A further application leads to significantly improved tractability results for sampling methods, cf. Section~\ref{sec:tractability}.
 
Again, we stress on the fact that all suggested algorithms, i.e.,
\begin{itemize}
\item the construction of the sampling sets,
\item the verification of the reconstruction property, and
\item the corresponding discrete Fourier transform 
\end{itemize}
are efficiently implementable. The algorithmic complexity of each
of them is bounded from above by a product of linear terms in the cardinality $|I|$,
linear terms in the dimension $d$, and a few logarithmic factors in $|I|$, cf. Algorithm~\ref{alg:compute_S_I_Lambda} as well as
\cite[Algorithm~3]{Kae17}.

\section*{Acknowledgments}
The author thanks Thomas K\"uhn and Winfried Sickel for pointing out the close connection of the main result of this paper to
their results on approximation numbers in \cite{CoKueSi16} and for the valuable discussions on that topic at the workshop "Challenges in optimal recovery and hyperbolic cross approximation" at the INI in Cambridge, 2019.
The author gratefully acknowledges the funding by the Deutsche Forschungsgemeinschaft (DFG, German Research Foundation, project number 380648269).

\small

\end{document}